\documentclass[a4,11pt]{article}
\setlength{\oddsidemargin}{0cm}
\setlength{\topmargin}{-.5cm}
\setlength{\headheight}{0cm}
\setlength{\headsep}{0cm}
\setlength{\footskip}{1cm}
\setlength{\textwidth}{16cm}
\setlength{\textheight}{23.7cm}
\setlength{\abovecaptionskip}{0cm}
\usepackage{graphicx}
\usepackage{amssymb}
\usepackage{url}
\usepackage{latexsym}
\usepackage{amssymb}
\usepackage{here}
\usepackage{wrapfig}
\usepackage{fancybox}
\usepackage{ascmac}
\usepackage{amsmath}
\usepackage{multicol}
\usepackage{mathtools}
\usepackage{amsthm}
\usepackage{cases}
\newtheorem{theo}{Theorem}[section]
\newtheorem{lem}[theo]{Lemma}
\newtheorem{rem}[theo]{Remark}
\newtheorem{coro}[theo]{Corollary}

\DeclareMathOperator*{\esssup}{ess\,sup}

\allowdisplaybreaks[4]

\begin{document}
\begin{center}
\noindent{\bf\Large Numerical verification method for positiveness of solutions to elliptic equations
}\vspace{10pt}\\
 {\normalsize Kazuaki Tanaka$^{1,*}$, Kouta Sekine$^{2}$, Shin'ichi Oishi$^{2,3}$}\vspace{5pt}\\
 {\it\normalsize $^{1}$Graduate School of Fundamental Science and Engineering, Waseda University, Japan\\
 $^{2}$Faculty of Science and Engineering, Waseda University, Japan\\
 $^{3}$CREST, JST, Japan}
\end{center}
{\bf Abstract}.
In this paper, we propose a numerical method for verifying the positiveness of solutions to semilinear elliptic equations.
We provide a sufficient condition for a solution to an elliptic equation to be positive in the domain of the equation, which can be checked numerically without requiring a complicated computation.
We present some numerical examples.
 
{\it Key words:} computer-assisted proof, elliptic boundary value problem, existence proof, verified numerical computation, verifying positiveness
\renewcommand{\thefootnote}{\fnsymbol{footnote}}
\footnote[0]{{\it E-mail address:} $^{*}$\texttt{ imahazimari@fuji.waseda.jp}\\[-3pt]}
\renewcommand\thefootnote{*\arabic{footnote}}

\section{Introduction}
We are concerned with verified numerical computation methods for solutions to the following elliptic problem:
\begin{subnumcases}
{\label{positiveproblem}}
-Lu(x)=f(u(x)), &$x\in \Omega$,\label{eq}\\
u(x)>0, & $x\in \Omega$\label{positivecond}
\end{subnumcases}
with an appropriate boundary condition, e.g., the Dirichlet type
\begin{align}
u(x)=0,~~x \in \partial\Omega,\label{diri}
\end{align}
where $\Omega$ is a bounded domain (i.e., an open connected bounded set) in $\mathbb{R}^{n}$ ($ n=1,2,3,\cdots$),
$f:\mathbb{R} \rightarrow \mathbb{R}$ 
is a given nonlinear function, and $L$ is a uniformly elliptic self-adjoint operator from its domain $D(L)$ to $L^{2}\left(\Omega\right)$ (the domain $D(L)$ depends on the smoothness of the boundary $\partial\Omega$).
More precisely, $L$ is written in the form
\begin{align}
L=\displaystyle \sum_{i,j=1}^{n}a_{i,j}\frac{\partial^{2}}{\partial x_{i}\partial x_{j}}+c,
\end{align}
where the following properties hold:
\begin{itemize}
\item$a_{i,j}\in L^{\infty}\left(\Omega\right) (i,j=1,2,\cdots,n)$ and $c\in L^{\infty}\left(\Omega\right)$;
\item$a_{i,j}=a_{j,i}$~$(i,j=1,2,\cdots,n)$;
\item There exists a positive number $\mu_{0}$ such that
\begin{align}
\displaystyle \sum_{i,j=1}^{n}a_{i,j}\left(x\right)\xi_{i}\xi_{j}\geq\mu_{0}\sum_{i=1}^{n}\xi_{i}^{2}
\end{align}
for all $ x\in\Omega$ and all $n$-tuples of real numbers $(\xi_{1},\xi_{2},\cdots,\xi_{n})$,
\end{itemize}
where $L^{\infty}\left(\Omega\right)$ is the functional space of Lebesgue measurable functions over $\Omega$ with the norm $\left\|u\right\|_{L^{\infty}\left(\Omega\right)}:=\esssup\{\left|u\left(x\right)\right|\,:\,x\in\Omega\}$ for $u\in L^{\infty}\left(\Omega\right)$.

Equation \eqref{eq}, including the case with \eqref{positivecond}, has been widely studied using analytical methods (see, e.g., \cite{lions1982existence,brezis1983positive,ouyang1999exact} and the references therein).
Moreover, verified numerical computation methods, which originate from \cite{nakao1988numerical, plum1991computer} and have been further developed by many researchers,
in recent years have turned to be effective to obtain, through computer-assistance, existence and multiplicity results for various concrete examples where purely analytical approaches have failed (see, e.g., \cite{nakao2001numerical, nakao2011numerical, plum2001computer, plum2008, takayasu2014remarks}).
These methods enable us to obtain a concrete ball containing exact solutions to \eqref{eq}, typically in the sense of the norms $\left\|\nabla\cdot\right\|_{L^{2}\left(\Omega\right)}$ and $\left\|\cdot\right\|_{L^{\infty}\left(\Omega\right)}$.
Therefore, such methods have the additional advantage that quantitative information of solutions to a target equation is provided accurately in a strict mathematical sense.
However, irrespective of how small the radius of the ball is, the positiveness of some solutions is not ensured without additional considerations.
For example, in the homogeneous Dirichlet case \eqref{diri}, it is possible for a solution that is verified by such methods not to be positive near the boundary $\partial\Omega$.

In this paper, we propose a numerical method for verifying the positiveness of solutions to \eqref{eq}, in order to verify solutions of (\ref{positiveproblem}).
Theorem \ref{theo1} provides a sufficient condition for positiveness,
which is a generalization of \cite[Theorem 2]{tanaka2015numerical} and \cite[Theorem 2.2]{tanaka2016sharp}.
Theorem \ref{theo1} (in this paper) enables us to numerically verify the positiveness for the whole of $\Omega$, and only requires a simple numerical computation.


\section{Verification of positiveness}
Throughout this paper, we omit the expression ``almost everywhere'' for Lebesgue measurable functions, for simplicity. For example, we employ the notation $u>0$ in the place of $u(x)>0$ a.e. $ x\in\Omega$.
Assuming that $H^{1}(\Omega)$ denotes the first order $L^{2}$-Sobolev space on $\Omega$, we define $H_{0}^{1}(\Omega):=\{u\in H^{1}(\Omega)\ :\ u=0~${\rm on}$~\partial\Omega${\rm ~in~the~trace~sense}$\}$.
Moreover, we assume that $f(u(\cdot)) \in L^2(\Omega)$
for each $u \in H^1(\Omega)$, and denote
\begin{align}
F:\left\{\begin{array}{lll}
V&\rightarrow&L^{2}\left(\Omega\right),\\
u&\mapsto&f(u(\cdot)).
\end{array}\right.\label{kataiF}
\end{align}

We introduce the following lemma that is required to prove Theorem \ref{theo1}.

\begin{lem}\label{main/lem}
Let $u\in H_{0}^{1}\left(\Omega\right)$ be a weak solution to \eqref{positiveproblem} with the boundary condition \eqref{diri},
such that
\begin{enumerate}
\item $F(u)\geq 0$ {\rm and} $F(u)\not\equiv 0 ;  $
\item $e(u(\cdot))<\infty$, where $e(x):=f(x)x^{-1},~x\in \mathbb{R}.$
\end{enumerate}
Then,
\begin{align}
\esssup\{e\left(u(x)\right)\,:\,x\in\Omega\}\geq\lambda_{1},\label{resoltlem}
\end{align}
where $\lambda_{1}$ is the first eigenvalue of the problem
\begin{align}
\left(-L\phi,v\right)_{L^{2}\left(\Omega\right)}=\lambda\left(\phi,v\right)_{L^{2}\left(\Omega\right)},~~\forall v\in H_{0}^{1}\left(\Omega\right);\label{weak/eig/pro}
\end{align}
the derivatives on the right side are understood in the sense of distributions.
\end{lem}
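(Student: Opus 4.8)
The plan is to combine the variational (Rayleigh-quotient) characterization of $\lambda_1$ with the pointwise bound $e(u(x))\le M$, where $M:=\esssup\{e(u(x))\,:\,x\in\Omega\}$ (finite by hypothesis~(2); if $M=+\infty$ then \eqref{resoltlem} is trivial). Concretely, I will establish
\begin{equation*}
\lambda_1\,\|u\|_{L^2(\Omega)}^2 \;\le\; (-Lu,u)_{L^2(\Omega)} \;\le\; M\,\|u\|_{L^2(\Omega)}^2 ,
\end{equation*}
from which \eqref{resoltlem} follows by dividing through by $\|u\|_{L^2(\Omega)}^2$, once we know $u\not\equiv 0$.

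The fact that $u\not\equiv 0$ is immediate: otherwise $F(u)=-Lu\equiv 0$, contradicting hypothesis~(1); hence $\|u\|_{L^2(\Omega)}^2>0$. For the left-hand inequality, recall that since $L$ is self-adjoint and uniformly elliptic with coefficients in $L^\infty(\Omega)$, the Rayleigh quotient $(-Lv,v)_{L^2(\Omega)}/(v,v)_{L^2(\Omega)}$ is bounded from below on $H_0^1(\Omega)\setminus\{0\}$ (uniform ellipticity bounds the leading part from below and $c\in L^\infty(\Omega)$ the lower-order part), and, $\Omega$ being bounded, $H_0^1(\Omega)$ embeds compactly into $L^2(\Omega)$; hence \eqref{weak/eig/pro} is a standard self-adjoint eigenvalue problem whose smallest eigenvalue is the minimum of that quotient,
\begin{equation*}
\lambda_1 = \min_{0\ne v\in H_0^1(\Omega)} \frac{(-Lv,v)_{L^2(\Omega)}}{(v,v)_{L^2(\Omega)}} ,
\end{equation*}
so that $(-Lu,u)_{L^2(\Omega)}\ge\lambda_1\|u\|_{L^2(\Omega)}^2$. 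For the right-hand inequality, I test the weak form of \eqref{eq} with $v=u\in H_0^1(\Omega)$ and use that $f(y)\,y=0$ at $y=0$ and $e(y)=f(y)\,y^{-1}$ for $y\ne 0$:
\begin{equation*}
(-Lu,u)_{L^2(\Omega)} = (F(u),u)_{L^2(\Omega)} = \int_\Omega f(u)\,u\,dx = \int_{\{u\ne0\}} e(u)\,u^2\,dx \le M\,\|u\|_{L^2(\Omega)}^2 ,
\end{equation*}
the last step because $e(u)\le M$ a.e.\ and $u^2\ge 0$.

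I do not anticipate a real obstacle; the argument is essentially this short computation. The points that warrant care are the justification of the Rayleigh-quotient identity for $\lambda_1$ --- precisely where self-adjointness of $L$, the ellipticity/coercivity estimate, and the boundedness of $\Omega$ are used --- and the treatment of the null set $\{u=0\}$ in the integral identity above (it may simply be discarded, since $f(y)\,y$ vanishes there and $\int_\Omega u^2\,dx=\int_{\{u\ne0\}}u^2\,dx$). It is also worth noting that only the part ``$F(u)\not\equiv 0$'' of hypothesis~(1) enters the argument, through the step $u\not\equiv 0$; the sign condition $F(u)\ge 0$, natural though it is for the intended application, is not needed to obtain \eqref{resoltlem}.
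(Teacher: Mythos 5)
Your proof is correct, but it takes a different route from the paper. You invoke the variational (Rayleigh-quotient) characterization $\lambda_{1}=\min_{0\neq v\in H_{0}^{1}(\Omega)}(-Lv,v)_{L^{2}(\Omega)}/\|v\|_{L^{2}(\Omega)}^{2}$ and test the weak equation with $u$ itself, obtaining $\lambda_{1}\|u\|_{L^{2}(\Omega)}^{2}\leq(F(u),u)_{L^{2}(\Omega)}\leq\esssup\{e(u(x))\}\,\|u\|_{L^{2}(\Omega)}^{2}$; the paper instead tests against the nonnegative first eigenfunction $\phi_{1}$, uses self-adjointness to write $(F(u),\phi_{1})_{L^{2}(\Omega)}=\lambda_{1}(u,\phi_{1})_{L^{2}(\Omega)}$, bounds $e(u)u\phi_{1}\leq\esssup\{e(u)\}\,u\phi_{1}$ (which uses $u>0$ from \eqref{positivecond}), and finally divides by $(F(u),\phi_{1})_{L^{2}(\Omega)}>0$, which is exactly where the hypotheses $F(u)\geq0$, $F(u)\not\equiv0$ enter. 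Your observation that the sign condition $F(u)\geq 0$ (and even the pointwise positivity of $u$) is not needed for your chain of inequalities is accurate: you only use $u\not\equiv0$ and $e(u)\leq\esssup e(u)$ a.e., since $u^{2}\geq0$ replaces the role of $u\phi_{1}\geq0$. What each approach buys: the paper's argument needs only the existence of a nonnegative first eigenfunction plus the formal self-adjointness identity, and its bookkeeping of signs mirrors how the lemma is applied in Theorem \ref{theo1}; yours needs the full min characterization of $\lambda_{1}$ (compact embedding $H_{0}^{1}(\Omega)\hookrightarrow L^{2}(\Omega)$ and lower-boundedness of the form, which you correctly flag), but in exchange is shorter and proves a slightly stronger statement with weaker hypotheses. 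Both proofs share the same formal treatment of the non-divergence-form operator with $L^{\infty}$ coefficients, interpreting $(-Lu,u)$ through the associated bilinear form, so your level of rigor matches the paper's. Also, a minor simplification available to you: $u\not\equiv0$ follows directly from \eqref{positivecond}, without the detour through $F(u)\not\equiv0$.
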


\begin{proof}
Let $\phi_{1}\geq 0\ (\phi_{1}\not\equiv 0)$ be the first eigenfunction corresponding to $\lambda_{1}$ (see, e.g., \cite[Theorems 1.2.5 and 1.3.2]{henrot2006extremum} for ensuring the nonnegativeness of the first eigenfunction).
Since $L$ is self-adjoint, it follows that
\begin{align*}
\left(F\left(u\right),\phi_{1}\right)_{L^{2}\left(\Omega\right)}=\lambda_{1}\left(u,\phi_{1}\right)_{L^{2}\left(\Omega\right)}.
\end{align*}
Therefore,
\begin{align*}
&\left(F\left(u\right),\phi_{1}\right)_{L^{2}\left(\Omega\right)}\\
=&\displaystyle \int_{\Omega}F\left(u\left(x\right)\right)u\left(x\right)^{-1}\left\{u\left(x\right)\phi_{1}\left(x\right)\right\}dx\\
\leq&\esssup\{e\left(u(x)\right)\,:\,x\in\Omega\}\left(u,\phi_{1}\right)_{L^{2}\left(\Omega\right)}\\
=&\lambda_{1}^{-1}\esssup\{e\left(u(x)\right)\,:\,x\in\Omega\}\left(F\left(u\right),\phi_{1}\right)_{L^{2}\left(\Omega\right)}.
\end{align*}
The positiveness of $\left(F\left(u\right),\phi_{1}\right)_{L^{2}\left(\Omega\right)}$ implies \eqref{resoltlem}.
\end{proof}

Using Lemma \ref{main/lem}, we are able to prove the following theorem, which provides a sufficient condition for the positiveness of solutions to \eqref{eq}.
\begin{theo}\label{theo1}
Let $u\in C^{2}\left(\Omega\right)\cap C(\overline{\Omega})$ be a function satisfying \eqref{eq} $($a boundary value condition is not necessary$)$ such that
$\underline{u}\leq u\leq\overline{u}$ in $\overline{\Omega}$ for $\underline{u},\ \overline{u}\in C(\overline{\Omega})$.
Function $u$ is always positive in $\Omega$, if the following conditions are satisfied:
\begin{enumerate}
\item$\underline{u}$ is positive in a nonempty subdomain $\Omega_{+}\subset\Omega\ (\Omega_{+}\neq\Omega) ;$
\item$\partial\Omega_{-}\cap\partial\Omega=\emptyset$, or $u=0$ {\rm on} $\partial\Omega_{-}\cap\partial\Omega$~if~$\partial\Omega_{-}\cap\partial\Omega\neq\emptyset$, where $\Omega_{-}$ is the interior of $\Omega\backslash\Omega_{+} ;$\label{theo1:boundary}
\item there exists a domain $\Omega_{-}^{\circ}\supset\Omega_{-}~ ( \Omega_{-}^{\circ}\neq\Omega_{-} )  $, s.t., $f\left([0,{\,}\max\{\overline{u}(x)_{\,}:_{\,}x\in\Omega_{-}^{\circ}\}]\right)\geq 0 ;$\label{theo1:f}
\item$f\left([\min\{\underline{u}(x)_{\,}:_{\,}x\in\Omega_{-}\},_{\,}0]\right)\leq 0$ and $f\left([\min\{\underline{u}(x)_{\,}:_{\,}x\in\Omega_{-}\},_{\,}0)\right)<0 ;$\label{theo1:f2}
\item For a domain $\hat{\Omega}$ such that $\Omega_{-}\subset\hat{\Omega}\subset\Omega,\ e([0,-\displaystyle \min\{\underline{u}(x)_{\,}:_{\,}x\in\Omega_{-}\}])<\lambda_{1}(\hat{\Omega})$, where $e(x):=f\left(x\right)x^{-1}$, and $\lambda_{1}(\hat{\Omega})$ is the first eigenvalue of the problem \eqref{weak/eig/pro} with the notational replacement $\Omega=\hat{\Omega}$.\label{theo1:lamda}
\end{enumerate}
\end{theo}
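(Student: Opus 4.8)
\medskip
\noindent\textbf{Proof proposal.} The plan is to reduce everything to the region $\Omega_{-}$ where positivity is not a priori known, then establish nonnegativity of $u$ on $\Omega$ by a contradiction argument based on Lemma~\ref{main/lem} applied to a connected component of the set where $u$ is negative, and finally upgrade to strict positivity by a strong maximum principle. For the reduction, note that on $\Omega_{+}$ one has $u\ge\underline{u}>0$, and by continuity $u\ge\underline{u}\ge0$ on $\overline{\Omega_{+}}$; since every point of $\Omega\setminus\Omega_{+}$ that does not lie in $\Omega_{-}$ belongs to $\overline{\Omega_{+}}$, the open set $\{x\in\Omega:u(x)<0\}$ is contained in $\Omega_{-}$. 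Moreover $u\ge\underline{u}\ge m:=\min\{\underline{u}(x):x\in\Omega_{-}\}$ on $\Omega_{-}$, so $u$ takes its values in $[m,0)$ wherever it is negative (and $m<0$, as otherwise condition~\ref{theo1:f2} would be vacuous).

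\emph{Step 1 ($u\ge0$ on $\Omega$).} Assume not, and let $\omega$ be a connected component of $\{u<0\}$, so $\emptyset\ne\omega\subset\Omega_{-}$. Since $\overline{\omega}\subset\overline{\Omega}$, we have $\partial\omega\subset\Omega\cup\partial\Omega$: on $\partial\omega\cap\Omega$ one gets $u=0$ by continuity (such a point lies in no component of $\{u<0\}$, so $u\ge0$ there, while $u\le0$ as a limit of negative values), and on $\partial\omega\cap\partial\Omega\subset\partial\Omega_{-}\cap\partial\Omega$ one gets $u=0$ from condition~\ref{theo1:boundary}. Hence $v:=-u|_{\omega}$ satisfies $v>0$ in $\omega$, $v=0$ on $\partial\omega$, and $v\in H_{0}^{1}(\omega)$ (from $u\in C^{2}(\Omega)\cap C(\overline{\Omega})$, $u|_{\partial\omega}=0$, $-Lu=f(u)\in L^{2}$, and standard elliptic estimates). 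Writing $g(t):=-f(-t)$, the function $v$ is a weak solution of $-Lv=g(v)$ on $\omega$. Since $-v=u\in[m,0)$ on $\omega$, condition~\ref{theo1:f2} gives $g(v)=-f(u)>0$ on $\omega$, so $g(v(\cdot))\ge0$ and $g(v(\cdot))\not\equiv0$, while the quotient $g(v(x))v(x)^{-1}=f(u(x))u(x)^{-1}$ is finite on $\omega$ in view of conditions~\ref{theo1:f2} and \ref{theo1:lamda} (the values of $-u$ on $\omega$ lying in $(0,-m]$). Lemma~\ref{main/lem}, applied on $\omega$ with nonlinearity $g$, therefore yields
\begin{align*}
\esssup\{\,g(v(x))v(x)^{-1}\,:\,x\in\omega\,\}\ \ge\ \lambda_{1}(\omega),
\end{align*}
$\lambda_{1}(\omega)$ being the first eigenvalue of \eqref{weak/eig/pro} on $\omega$. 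Because $\omega\subset\Omega_{-}\subset\hat{\Omega}$, monotonicity of the first Dirichlet eigenvalue of $-L$ under enlargement of the domain---immediate from the Rayleigh characterisation $\lambda_{1}(D)=\inf\{(-L\phi,\phi)_{L^{2}(D)}/(\phi,\phi)_{L^{2}(D)}:0\ne\phi\in H_{0}^{1}(D)\}$ together with $H_{0}^{1}(\omega)\subset H_{0}^{1}(\hat{\Omega})$ by extension by zero---gives $\lambda_{1}(\omega)\ge\lambda_{1}(\hat{\Omega})$. Combined with the bound in condition~\ref{theo1:lamda} this produces $\lambda_{1}(\hat{\Omega})\le\lambda_{1}(\omega)\le\esssup\{g(v(x))v(x)^{-1}:x\in\omega\}<\lambda_{1}(\hat{\Omega})$, a contradiction. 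Hence $\{u<0\}=\emptyset$ and $u\ge0$ on $\Omega$.

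\emph{Step 2 ($u>0$ on $\Omega$).} Suppose $u(x_{0})=0$ for some $x_{0}\in\Omega$. Pick $r>0$ so that $B:=B(x_{0},r)$ has $\overline{B}\subset\Omega$ and, by continuity (recall $u(x_0)=0$), $u<M'$ on $B$, where $M':=\max\{\overline{u}(x):x\in\Omega_{-}^{\circ}\}>0$ (positive because $\Omega_{-}^{\circ}$, a proper open superset of $\Omega_{-}$, meets $\Omega_{+}$, where $\overline{u}\ge\underline{u}>0$). By Step~1, $0\le u<M'$ on $B$, so $-Lu=f(u)\ge0$ on $B$ by condition~\ref{theo1:f}. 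Writing $L=L_{0}+c$ with $L_{0}:=\sum_{i,j=1}^{n}a_{i,j}\frac{\partial^{2}}{\partial x_{i}\partial x_{j}}$ uniformly elliptic and using $u\ge0$, we obtain $-L_{0}u+\|c\|_{L^{\infty}(\Omega)}u=f(u)+(c+\|c\|_{L^{\infty}(\Omega)})u\ge0$ on $B$; thus $u$ is a nonnegative supersolution on the connected set $B$ of the operator $-L_{0}+\|c\|_{L^{\infty}(\Omega)}$, which has nonnegative zeroth-order coefficient, and as $u$ attains the value $0$ at the interior point $x_{0}$ the strong maximum principle forces $u\equiv0$ on $B$. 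Consequently $Z:=\{x\in\Omega:u\equiv0\text{ on a neighbourhood of }x\}$ is nonempty; it is open by definition and, by the very same ball argument, closed in $\Omega$, so $Z=\Omega$ by connectedness---contradicting $u>0$ on $\Omega_{+}$. Hence $u>0$ throughout $\Omega$.

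\emph{Main obstacle.} The heart of the matter is Step~1: recognising that on a component of the negative set $-u$ solves an elliptic Dirichlet problem with a strictly positive right-hand side, so that Lemma~\ref{main/lem} applies, and then closing the argument via the eigenvalue comparison $\lambda_{1}(\omega)\ge\lambda_{1}(\hat{\Omega})$ (from $\omega\subset\hat{\Omega}$) and condition~\ref{theo1:lamda}. The surrounding bookkeeping---the boundary behaviour on $\partial\omega$ via condition~\ref{theo1:boundary}, the precise matching of the interval and sign convention for $e$ in condition~\ref{theo1:lamda} with the range of $-u$ on $\omega$, the inclusions among $\Omega_{+},\Omega_{-},\Omega_{-}^{\circ},\hat{\Omega}$, and the $H^{1}$-regularity needed to invoke Lemma~\ref{main/lem}---is routine given $u\in C^{2}(\Omega)\cap C(\overline{\Omega})$.
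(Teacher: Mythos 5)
Your proposal is essentially the paper's own proof: you argue by contradiction on a component $\omega$ of the negative set, apply Lemma~\ref{main/lem} to $v=-u$ with the nonlinearity $-f(-\,\cdot\,)$, use zero extension $H_{0}^{1}(\omega)\subset H_{0}^{1}(\hat{\Omega})$ to get $\lambda_{1}(\omega)\geq\lambda_{1}(\hat{\Omega})$ and contradict Assumption~\ref{theo1:lamda}, and then use the strong maximum principle together with Assumption~\ref{theo1:f} to exclude interior zeros---the only difference being that you split the argument into nonnegativity followed by strict positivity, while the paper folds both into a single contradiction. Note that, exactly as in the paper's proof, you bound the quotient $f(u(x))u(x)^{-1}$ (with $u(x)\in[m,0)$) by Assumption~\ref{theo1:lamda}, which literally controls $e$ on $[0,-m]$; this tacit identification (harmless when $f$ is odd, as in the paper's examples) is inherited from, not worse than, the paper's own step.
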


\begin{proof}
Assume that $u$ is not always positive in $\Omega$, that is, there exists a point $x\in\Omega_{-}$ such that $u(x)\leq 0$.
The strong maximum principle ensures that $u$ cannot be zero at an interior minimum, i.e., there exists a point $x\in\Omega_{-}$ such that $u(x)<0$ (the case that $u\equiv 0$ in $\Omega_{-}$ is generally allowed, but this case is also ruled out due to Assumption \ref{theo1:f}; note that $(u\geq)$\,$\underline{u}>0$ in $\Omega_{-}^{\circ}\backslash\Omega_{-}$).
In other words, there exists a nonempty subdomain $\Omega_{-}'\subset\Omega_{-}$ such that $u<0$ in $\Omega_{-}'$ and $u=0$ on $\partial\Omega_{-}'$.
Therefore, the restricted function $v:=-u|_{\Omega_{-}'}$ can be regarded as a solution to
\begin{align*}
\left\{\begin{array}{l l}
-Lv(x)=f_{-}(v(x))\left(:=-f(-v(x))\right) &x\in\Omega_{-}',\\
v(x)>0 &x\in\Omega_{-}',\\
v(x)=0 &x\in\partial\Omega_{-}'.\\
\end{array}\right.
\end{align*}
Since $f_{-}(v(\cdot))\geq 0,\ f_{-}(v(\cdot))\not\equiv 0$, and $f_{-}(v(\cdot))v(\cdot)^{-1}<\lambda_{1}(\hat{\Omega})(<\infty)$ in $\Omega_{-}'$ due to Assumptions \ref{theo1:f2} and \ref{theo1:lamda},
it follows from Lemma \ref{main/lem} that
\begin{align*}
\displaystyle \left(\lambda_{1}(\hat{\Omega})>\sup_{x\in\Omega_{-}'}e(-u(x))=\right)\sup_{x\in\Omega_{-}'}\frac{f_{-}(v(x))}{v(x)}\geq\lambda_{1}\left(\Omega_{-}'\right),
\end{align*}
where $\lambda_{1}\left(\Omega_{-}'\right)$ is the first eigenvalue of \eqref{weak/eig/pro} with the notational replacement $\Omega=\Omega_{-}'$.
Since the inclusion $\Omega_{-}'\subset\hat{\Omega}$ ensures that all functions in $H_{0}^{1}\left(\Omega_{-}'\right)$ can be regarded as functions in $H_{0}^{1}(\hat{\Omega})$ by considering the zero extension outside $\Omega_{-}'$, the inequality $\lambda_{1}\left(\Omega_{-}'\right)\geq\lambda_{1}(\hat{\Omega})$ follows.
Thus, we have the contradiction that $\lambda_{1}(\hat{\Omega})>\lambda_{1}(\hat{\Omega})$.
\end{proof}
\begin{rem}\label{rem:theo1:1}
Assumption $\ref{theo1:boundary}$ in Theorem $\ref{theo1}$ always holds when $u$ satisfies the homogeneous Dirichlet boundary condition \eqref{diri}.
\end{rem}
\begin{rem}\label{rem:theo1:2}
We may employ the condition that
$f([0,{\,}\displaystyle \max\{\overline{u}(x)_{\,}:_{\,}x\in\Omega_{-}\}+\varepsilon])\geq 0$
for a positive number $\varepsilon$, instead of Assumption $\ref{theo1:f}$.
Indeed, since $\overline{u}$ is a continuous function over $\Omega$,
there exists a domain $\Omega_{-}^{\circ}\supset\Omega_{-}$
such that
$\displaystyle \max\{\overline{u}(x) : x\in\Omega_{-}^{\circ}\}\leq\max\{\overline{u}(x) : x\in\Omega_{-}\}+\varepsilon$ for any $\varepsilon>0$.
\end{rem}
The following corollary is convenient to prove the positiveness of a solution $u$ that is proven to exist in a ball centered around an approximation $\hat{u}$.
Indeed, in Section \ref{sec:example}, we present numerical examples where the positiveness of solutions to \eqref{eq}, that are in balls centered around their approximations, is verified.
\begin{coro}\label{coro1:theo1}
Let $u\in C^{2}\left(\Omega\right)\cap C(\overline{\Omega})$ be a function satisfying \eqref{eq} $($a boundary value condition is not necessary$)$
such that $\left|u(x)-\hat{u}(x)\right|\leq r$ for all $x\in \Omega$, for given $\hat{u}\in C(\overline{\Omega})$ and $r>0$.
Function $u$ is always positive in $\Omega$, if the following conditions are satisfied:
\begin{enumerate}
\item$\hat{u}-r$ is positive in a nonempty subdomain $\Omega_{+}\subset\Omega\ (\Omega_{+}\neq\Omega) ;$\label{coro1:1}
\item$\partial\Omega_{-}\cap\partial\Omega=\emptyset$, or $u=0$ {\rm on} $\partial\Omega_{-}\cap\partial\Omega$~if~$\partial\Omega_{-}\cap\partial\Omega\neq\emptyset$, where $\Omega_{-}$ is the interior of $\Omega\backslash\Omega_{+} ;$\label{coro1:2}
\item$f\left([0,2r+\varepsilon]\right)\geq 0$ for a positive number $\varepsilon ;$\label{coro1:3}
\item$f\left([m-r,_{\,}0]\right)\leq 0$ and $f\left([m-r,_{\,}0)\right)<0$, where $m:=\displaystyle \min\{\hat{u}(x)_{\,}:_{\,}x\in\Omega_{-}\} ;$\label{coro1:4}
\item For a domain $\hat{\Omega}$ such that $\Omega_{-}\subset\hat{\Omega}\subset\Omega,\ e([0,-m+r])<\lambda_{1}(\hat{\Omega})$, where $e(x):=f\left(x\right)x^{-1}$, and $\lambda_{1}(\hat{\Omega})$ is the first eigenvalue of the problem \eqref{weak/eig/pro} with the notational replacement $\Omega=\hat{\Omega}$.\label{coro1:5}
\end{enumerate}
\end{coro}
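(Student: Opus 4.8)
The plan is to obtain Corollary \ref{coro1:theo1} as a direct specialization of Theorem \ref{theo1}, taking the enclosing functions to be $\underline{u} := \hat{u} - r$ and $\overline{u} := \hat{u} + r$. Both lie in $C(\overline{\Omega})$ because $\hat{u}$ does, and the hypothesis $|u(x) - \hat{u}(x)| \le r$ on $\Omega$ is nothing but $\underline{u} \le u \le \overline{u}$ in $\overline{\Omega}$. It then remains only to check that, under this choice, each numbered assumption of the corollary implies the corresponding assumption of Theorem \ref{theo1}; once that is done the conclusion $u>0$ in $\Omega$ is immediate from Theorem \ref{theo1}.

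Assumptions \ref{coro1:1} and \ref{coro1:2} are literally the first two assumptions of Theorem \ref{theo1} with $\underline{u} = \hat{u} - r$ inserted, so there is nothing to prove there. For the remaining ones I would first record the elementary identity $\min\{\underline{u}(x) : x \in \Omega_{-}\} = \min\{\hat{u}(x)-r : x \in \Omega_{-}\} = m - r$, where $m := \min\{\hat{u}(x) : x \in \Omega_{-}\}$, and hence $-\min\{\underline{u}(x) : x \in \Omega_{-}\} = -m + r$. Substituting these into the fourth and fifth assumptions of Theorem \ref{theo1} turns them into $f([m-r,0]) \le 0$, $f([m-r,0)) < 0$, and $e([0,-m+r]) < \lambda_{1}(\hat{\Omega})$, which are exactly Assumptions \ref{coro1:4} and \ref{coro1:5}.

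The only assumption not matching verbatim is the nonnegativity of $f$ just above $0$, and here I would invoke Remark \ref{rem:theo1:2} in place of the third assumption of Theorem \ref{theo1}. Since $\Omega_{-}$ lies outside the region $\Omega_{+}$ on which $\hat{u}-r$ is positive, we have $\hat{u}-r \le 0$ on $\Omega_{-}$ (this is the reading of $\Omega_{-}$ already used in the proof of Theorem \ref{theo1}), hence $\overline{u} = (\hat{u}-r) + 2r \le 2r$ there and $\max\{\overline{u}(x):x\in\Omega_{-}\} + \varepsilon \le 2r + \varepsilon$; therefore Assumption \ref{coro1:3}, namely $f([0,2r+\varepsilon]) \ge 0$, yields $f([0,\max\{\overline{u}(x):x\in\Omega_{-}\}+\varepsilon]) \ge 0$, which by Remark \ref{rem:theo1:2} may be substituted for the third assumption. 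With all five hypotheses of Theorem \ref{theo1} verified, that theorem gives $u>0$ in $\Omega$. I expect the only point requiring care to be precisely this last interval bookkeeping — the reduction $\overline{u}\le 2r$ on $\Omega_{-}$ and the matching of the cushion $\varepsilon$ of Assumption \ref{coro1:3} with the $\varepsilon$ in Remark \ref{rem:theo1:2} — while everything else is a mechanical substitution.
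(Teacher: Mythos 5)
Your proposal is correct and follows essentially the same route the paper intends: the paper offers no separate proof of Corollary \ref{coro1:theo1}, only the remark that its Assumption \ref{coro1:3} is a simplification via Remark \ref{rem:theo1:2}, which is exactly your specialization $\underline{u}=\hat{u}-r$, $\overline{u}=\hat{u}+r$ together with the bound $\overline{u}\le 2r$ on $\Omega_{-}$ and the identification $\min\{\underline{u}(x):x\in\Omega_{-}\}=m-r$. The implicit reading that $\hat{u}-r\le 0$ on $\Omega_{-}$ (i.e., that $\Omega_{+}$ exhausts the positivity region of $\hat{u}-r$), which you flag, is the same convention the paper itself uses, so no further argument is required.
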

\begin{rem}
Assumption $\ref{coro1:3}$ in Corollary $\ref{coro1:theo1}$ is simplified owing to the discussion in Remark $\ref{rem:theo1:2}$.
\end{rem}

\section{Numerical example}\label{sec:example}
In this section, we present numerical examples in which the positiveness of solutions to \eqref{eq} is verified.
All computations were carried out on a computer with Intel Xeon E7-4830 2.20 GHz$\times$40 processors, 2 TB RAM, CentOS 6.6, and MATLAB 2012b.
All rounding errors were strictly estimated using the toolboxes of INTLAB version 9 \cite{rump1999book} and KV library version 0.4.36 \cite{kashiwagikv} for verified numerical computations.
Therefore, the accuracy of all results was mathematically guaranteed.
Hereafter, $\overline{B}\left(x,r;\ \|\cdot\|\right)$ denotes the closed ball whose center is $x$, and whose radius is $r\geq 0$ in the sense of the norm $\|\cdot\|$.

For the first example, 
we select the case in which
$L=\Delta$ (the usual Laplace operator), $f(u(\cdot))=u(\cdot)^{p}$ and $F(u)=u^p$ ($p=3, 5$), and $\Omega=(0,1)^{2} \subset \mathbb{R}$, i.e., we consider the problem of finding solutions to
\begin{subnumcases}
{\label{problem1}}
-\Delta u=u^p &in $\Omega$,\label{eq1}\\
u>0 &in $\Omega$,\label{positive1}\\
u=0 &on $\partial\Omega$.\label{diri1}
\end{subnumcases}
We computed approximate solutions $\hat{u}$ to
\begin{align}
\left\{\begin{array}{l l}
-\Delta u=\left|u\right|^{p-1}u &\mathrm{in~} \Omega,\\
u=0 &\mathrm{on~} \partial\Omega,
\end{array}\right.\label{absproblem}
\end{align}
which is displayed in Fig.~\ref{fig1}, using a Legendre polynomial basis, i.e., we constructed $\hat{u}$ as
\begin{align}
\displaystyle \hat{u}=\sum_{i,j=1}^{N}u_{i,j}\phi_{i}\phi_{j},~~u_{i,j}\in \mathbb{R},
\end{align}
where each $\phi_{i}$ is defined by
\begin{align}
\displaystyle \phi_{n}(x)=\frac{1}{n(n+1)}x(1-x)\frac{dP_{n}}{dx}(x),~~n=1,2,3,\cdots
\end{align}
with the Legendre polynomials $P_{n}$ defined by
\begin{align}
P_{n}=\displaystyle \frac{(-1)^{n}}{n!}\left(\frac{d}{dx}\right)^{n}x^{n}(1-x)^{n},~~n=0,1,2,\cdots.
\end{align}
We proved the existence of solutions $u$ to \eqref{absproblem} in an $H_{0}^{1}$-ball $\overline{B}(\hat{u},r_{1};\,\|\nabla \cdot\|_{L^2(\Omega)})$ and an $L^{\infty}$-ball $\overline{B}(\hat{u},r_{2};\,\|\cdot\|_{L^{\infty}(\Omega)})$ both centered around the approximations $\hat{u}$, using 
Theorem \ref{plum2001} \cite{plum2001computer} combined with the method in \cite{tanaka2014verified, liu2015framework} (see Section \ref{verificationtheory} for details).
We then verified the positiveness of the verified solutions using Corollary \ref{coro1:theo1}.
For the problem \eqref{absproblem}, Assumptions \ref{coro1:2}, \ref{coro1:3}, and \ref{coro1:4} in Corollary \ref{coro1:theo1} are always satisfied.
Therefore, it is only necessary to confirm Assumptions \ref{coro1:1} and \ref{coro1:5}, where $e(u)=u^{p-1}$ and $\hat{\Omega}=\Omega$.
Note that the verified solutions have the regularity to be in $C^{2}\left(\Omega\right)\cap C\left(\overline{\Omega}\right)$, regardless of the regularity of the corresponding approximations $\hat{u}$.
Indeed, for each $h\in L^{2}\left(\Omega\right)$, the problem
\begin{align*}
\left\{\begin{array}{l l}
-\Delta u=h &\mathrm{in}\ \Omega,\\
u=0 &\mathrm{on}\ \partial\Omega\\
\end{array}\right.
\end{align*}
has a unique solution $u\in H^{2}(\Omega)$, such as when $\Omega$ is a bounded convex domain with a piecewise $C^{2}$ boundary $($see, e.g.,  \cite[Section 3.3]{grisvard2011elliptic}).
Therefore, the so-called bootstrap argument ensures that a weak solution $u\in H_{0}^{1}\left(\Omega\right)$ to \eqref{absproblem} on such a domain $\Omega$, is in $C^{\infty}\left(\Omega\right)(\subset C^{2}\left(\Omega\right))$.
Table.\,\ref{tab1} presents the verification result, which ensures the positiveness of the verified solutions to \eqref{absproblem} centered around $\hat{u}$, owing to the condition that
$e([0,-m_{\Omega_{-}}+r])\displaystyle \leq(-m_{\Omega}+r)^{p-1}<\lambda_{1}(\Omega)$,
where we denote $m_{\Omega_{-}}=\min\{\hat{u}(x)_{\,}:_{\,}x\in\Omega_{-}\}$ and $m_{\Omega}=\min\{\hat{u}(x)_{\,}:_{\,}x\in\Omega\}$.

 \newcommand{\sizee}{0.5\hsize}
 \newcommand{\vminus}{\vspace{0mm}}
 \begin{figure}[H]
 \begin{minipage}{\sizee}
	 \begin{center}
	   \vminus\includegraphics[height=55 mm]{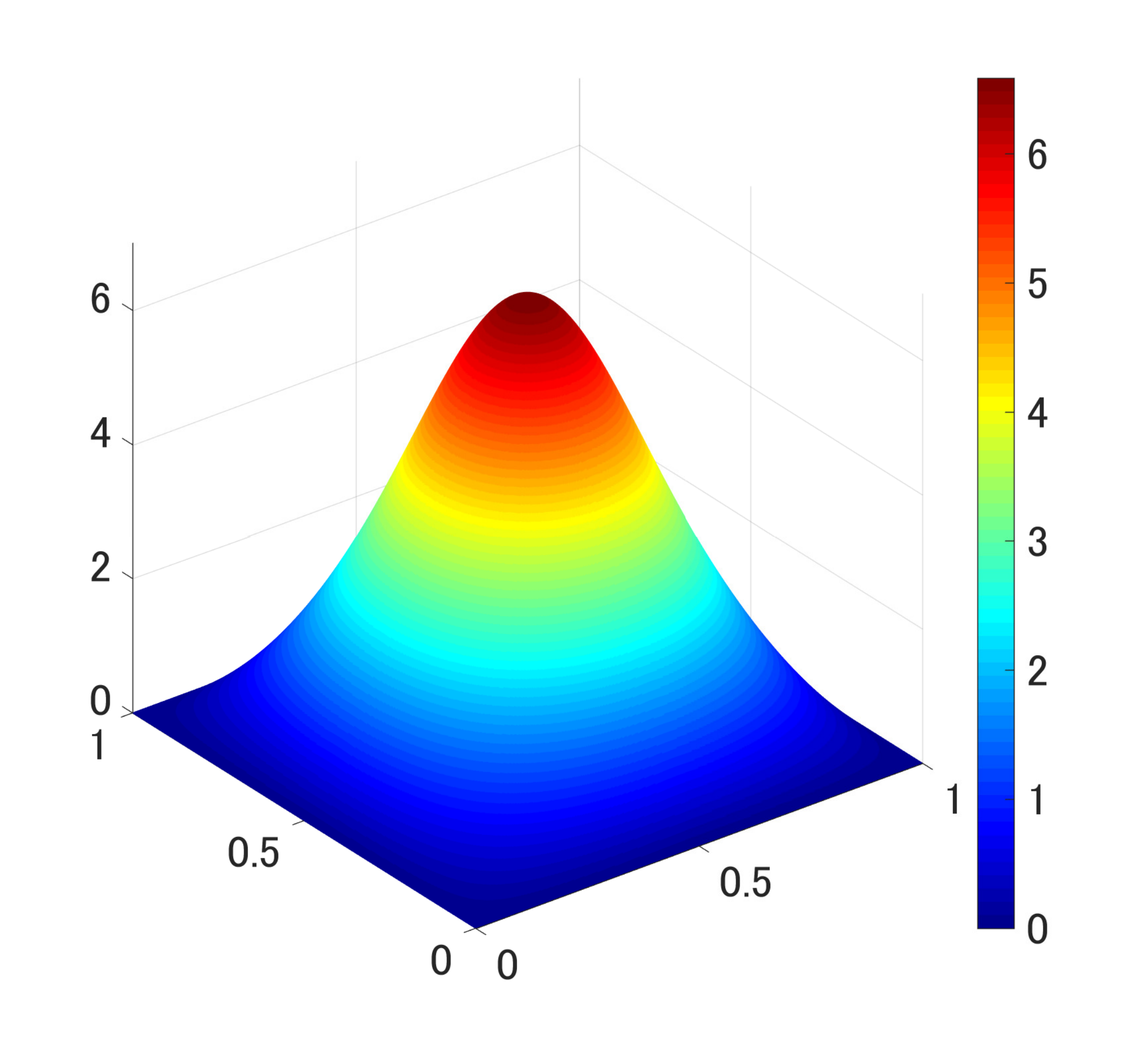}\\
	   \footnotesize$p=3$,~~$\displaystyle \max_{x\in\Omega}\hat{u}(x)\approx 6.6232$
	 \end{center}
	 ~
 \end{minipage}
 \begin{minipage}{\sizee}
	 \begin{center}
	   \vminus\includegraphics[height=55 mm]{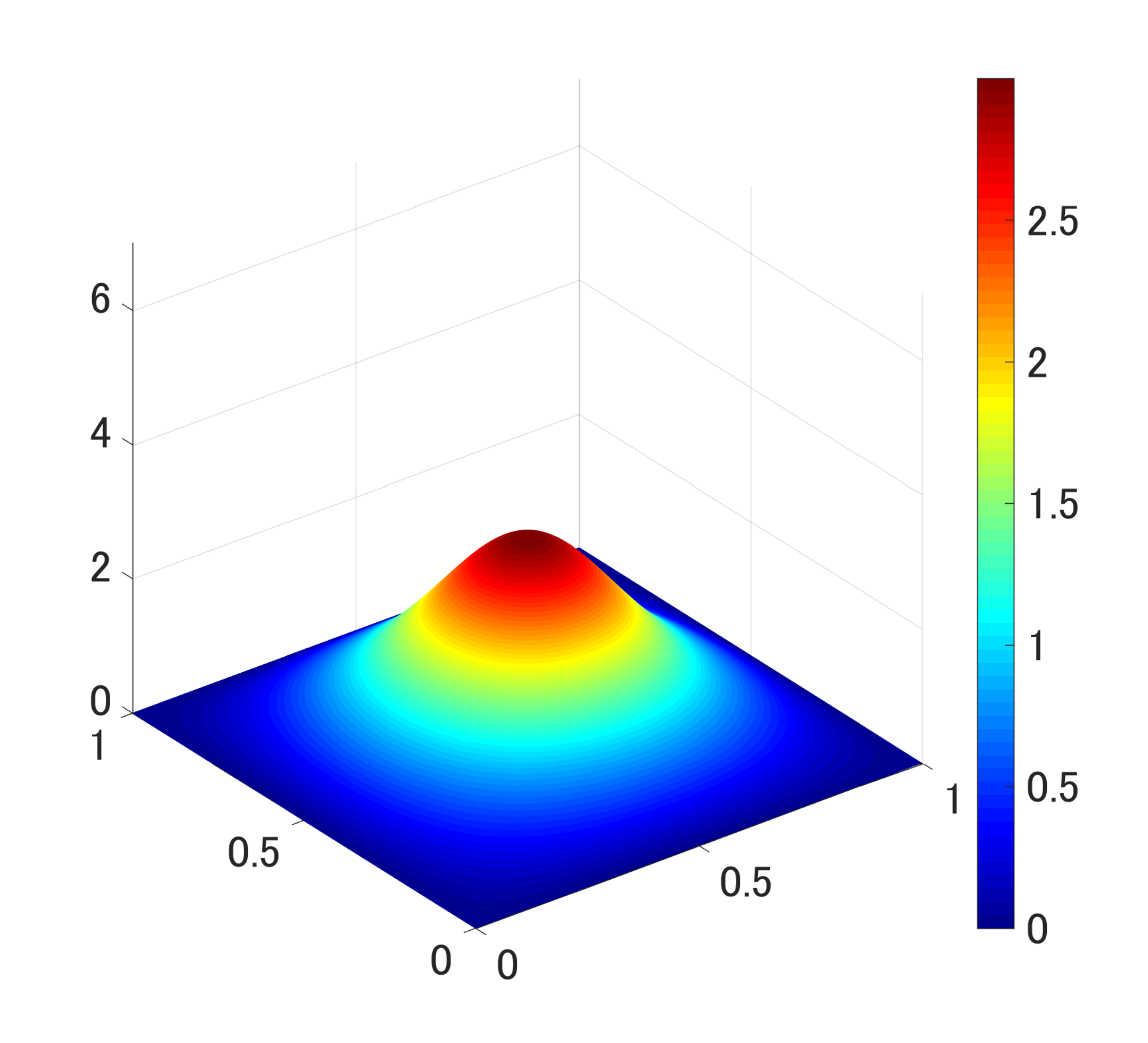}\\
	   \footnotesize$p=5$,~~$\displaystyle \max_{x\in\Omega}\hat{u}(x)\approx 3.1721$
	 \end{center}
	 ~
 \end{minipage}
 \caption{Approximate solutions to \eqref{absproblem} on $\Omega=(0,1)^{2}$ for $p=3,5$.}
 \label{fig1}
 \end{figure} 

 \begin{table}[h]
  \caption{Verification results for \eqref{problem1} on $\Omega=(0,1)^{2}$, where $p=3$ and $5$.}
  \label{tab1}
  \begin{center}
   \renewcommand\arraystretch{1.3}
   \footnotesize
   \begin{tabular}{c|cccccc}
    \hline
    $p$&
	$N$&
    $r_{1}$&
    $r_{2}$&
	$m_{\Omega}$&
	$(-m_{\Omega}+r)^{p-1}\leq$&
    $\lambda_{1}(\Omega)$\\
    \hline
    \hline
    3&
	150&
	  6.636469152e-13&
	  4.363745213e-12&
	  0&
	  1.904227228e-23&
    $(19.7 \leq)~2\pi^{2}$\\
    5&
	150&
	  7.088374332e-13&
	  1.724519836e-10&
	  0&
	  8.844489601e-40&
    $''$\\
    \hline
   \end{tabular}
  \end{center}
 \end{table}

In the next example, we consider the stationary problem of the Allen-Cahn equation:
\begin{subnumcases}
{\label{problem3}}
-\varepsilon^{2}\Delta u=u-u^3 &in $\Omega$,\label{eq3}\\
u>0 &in $\Omega$,\label{positive3}\\
u=0 &on $\partial\Omega$,\label{diri3}
\end{subnumcases}
where $\varepsilon>0$.
We again constructed approximate solutions $\hat{u}$ to \eqref{eq3} with \eqref{diri3}, i.e., to
\begin{align}
\left\{\begin{array}{l l}
-\Delta u=\varepsilon^{-2}(u-u^3) &\mathrm{in~} \Omega,\\
u=0 &\mathrm{on~} \partial\Omega,
\end{array}\right.\label{allenabsproblem}
\end{align}
using a Legendre polynomial basis.
These solutions are displayed in Fig.~\ref{fig3}.
Using Theorem \ref{plum2001}, we again verified the existence of solutions to \eqref{allenabsproblem} in the balls $\overline{B}(\hat{u},r_{1};\,\|\nabla \cdot\|_{L^2(\Omega)})$ and $\overline{B}(\hat{u},r_{2};\,\|\cdot\|_{L^{\infty}(\Omega)})$, which are also $C^2$-regular.
For the problem \eqref{allenabsproblem}, Assumption \ref{coro1:2} in Corollary \ref{coro1:theo1} is always satisfied as well.
Therefore, it is necessary to confirm Assumptions \ref{coro1:1}, \ref{coro1:3}, \ref{coro1:4}, and \ref{coro1:5}, where $e(u)=\varepsilon^{-2}(1-u^2)$.
Note that Assumption \ref{coro1:3} is satisfied if $2r_2<1$,
and Assumption \ref{coro1:4} is satisfied if $-m_{\Omega_{-}}+r_2(<-m_{\Omega}+r_2)<1$.
We present the verification results for $\varepsilon=0.1$, $0.05$, and $0.025$ in Table \ref{tab3}, which ensure the positiveness of the verified solutions to \eqref{allenabsproblem} centered around $\hat{u}$, owing to the condition that $e([0,-m_{\Omega_{-}}+r_2])\leq\varepsilon^{-2}<\lambda_{1}(\hat{\Omega})$, where we set $\hat{\Omega}=(0,1)^2\backslash[0.009765625,0.990234375]^{2}$ and proved $\Omega_{-}\subset\hat{\Omega}$ in all the cases.
The upper and lower bounds for the first eigenvalue $\lambda_{1}(\hat{\Omega})$ were numerically computed with verification using the method in \cite{liu2013verified,liu2015framework} with a piecewise linear finite element basis.

 \renewcommand{\sizee}{0.33\hsize}
 \begin{figure}[H]
 \begin{minipage}{\sizee}
	 \begin{center}
	   \vminus\includegraphics[height=40 mm]{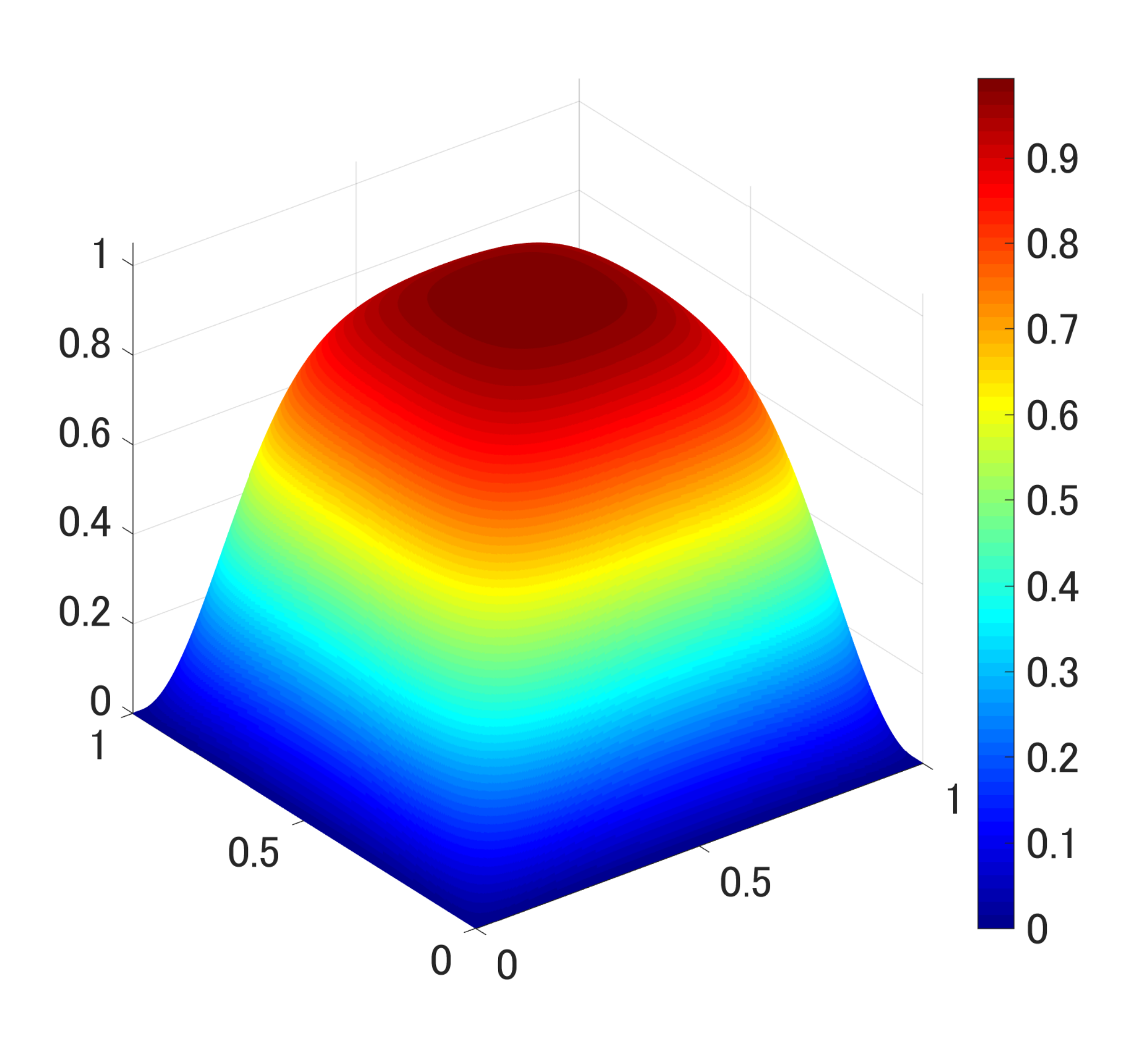}\\
	   \footnotesize{$\varepsilon=0.1$}
	 \end{center}
	 ~
 \end{minipage}
 \begin{minipage}{\sizee}
	 \begin{center}
	   \vminus\includegraphics[height=40 mm]{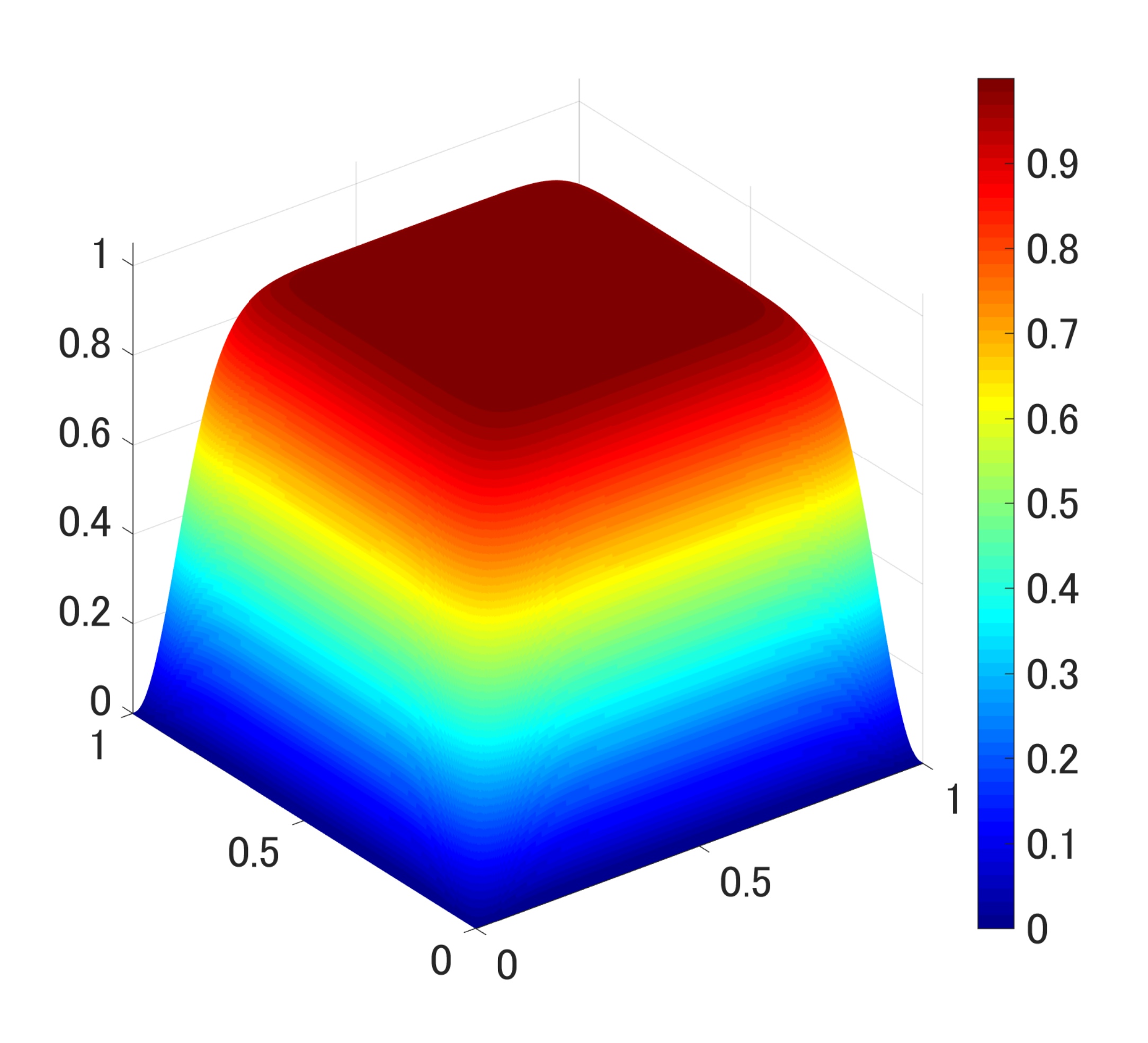}\\
	   \footnotesize{$\varepsilon=0.05$}
	 \end{center}
	 ~
 \end{minipage}
 \begin{minipage}{\sizee}
	 \begin{center}
	   \vminus\includegraphics[height=40 mm]{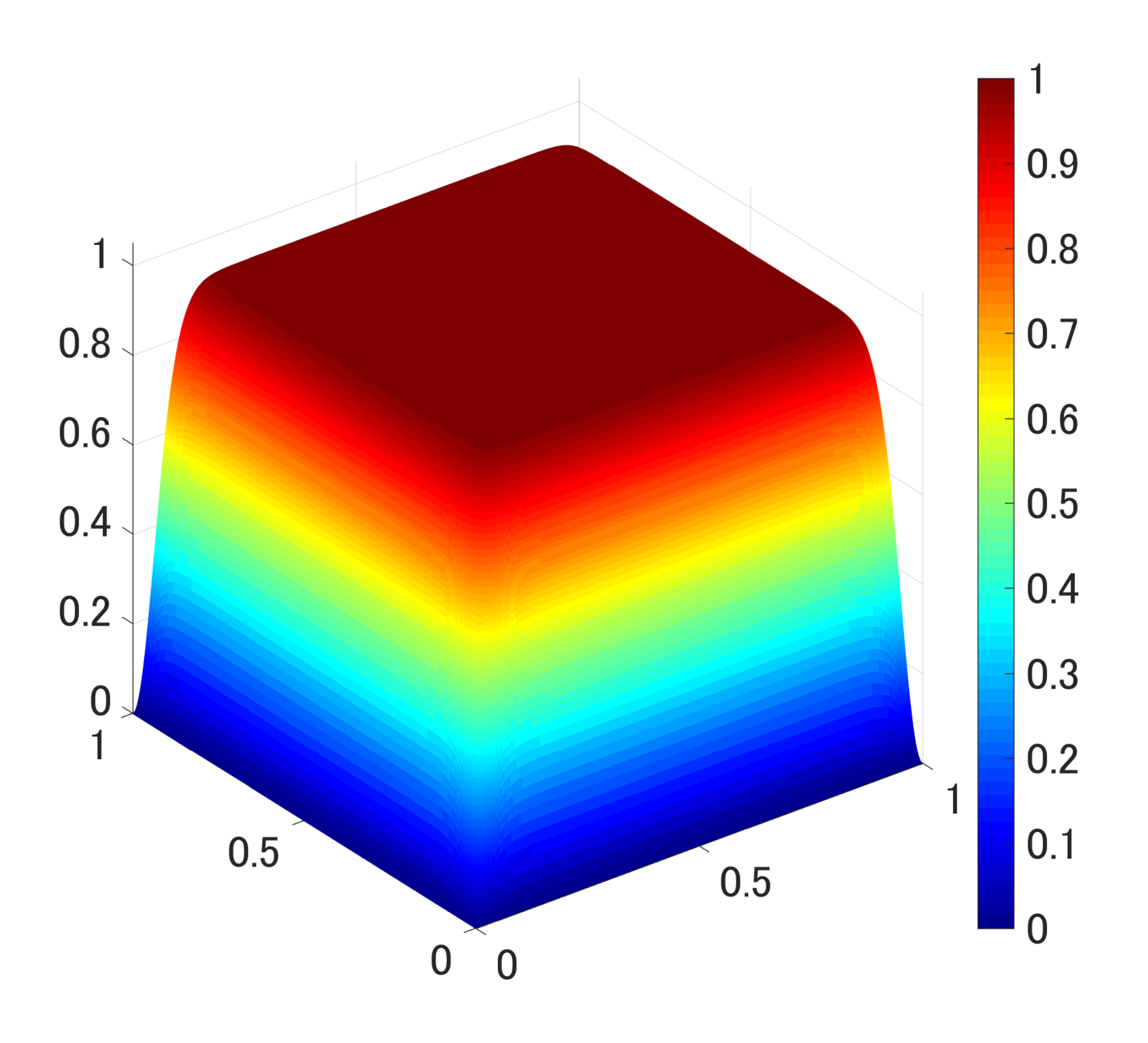}\\
	   \footnotesize{$\varepsilon=0.025$}
	 \end{center}
	 ~
 \end{minipage} 
 \caption{Approximate solutions to \eqref{allenabsproblem} on $\Omega=(0,1)^{2}$.}
 \label{fig3}
 \end{figure} 

 \begin{table}[h]
  \caption{Verification results for \eqref{problem3} on $\Omega=(0,1)^{2}$.}
  \label{tab3}
  \begin{center}
   \renewcommand\arraystretch{1.3}
   \footnotesize
   \begin{tabular}{l|cccccc}
    \hline
    $\varepsilon$&
	$N$&
    $r_{1}$&
    $r_{2}$&
	$m_{\Omega}\geq$&
    $\varepsilon^{-2}$&
    $\lambda_{1} (\hat{\Omega}) \in$\\
    \hline
    \hline
    0.1&
	60&
	  5.820766091e-15&
	  4.685104029e-14&
	 -2.543817144e-03&
	  1.0e+02&
      $[0.95852,1.00312]$e+05 \\
    0.05&
	60&
	  9.584445593e-10&
	  3.937471389e-08&
	 -9.935337351e-03&
	  4.0e+02&
      $''$\\
    0.025&
	80&
	  1.385525794e-08&
	  2.611277945e-06&
	 -3.056907364e-02&
	  1.6e+03&
      $''$\\

	\hline
   \end{tabular}
  \end{center}
 \end{table}

\appendix

\section{Verification theorem for elliptic problems}\label{verificationtheory}
In this section, we apply the method summarized in \cite{plum1992explicit, plum2001computer, plum2008} to a verified numerical computation for solutions to \eqref{eq} with \eqref{diri}.
Hereafter, we denote $V=H_{0}^{1}\left(\Omega\right)$.
We assume that $F$ defined in \eqref{kataiF} is Fr\'echet differentiable, and there exists $q\in L^{\infty}(\Omega)$ such that
\begin{align}
F_{\hat{u}}'u=qu~~~{\rm for}~u\in V,\label{Fdivcond}
\end{align}
where $F_{\hat{u}}'$ is the Fr\'echet derivative of $F$ at $\hat{u}\in V$, i.e., we may regard $F_{\hat{u}}'$ as an $L^{\infty}$ function.
Indeed, when we select $F$ as in the examples of Section \ref{sec:example}, this is true.
We endow $V$ with inner product
\begin{align*}
\left(\cdot,\cdot\right)_{V}=\left(\nabla\cdot,\nabla\cdot\right)_{L^{2}(\Omega)}+\tau\left(\cdot,\cdot\right)_{L^{2}(\Omega)}
\end{align*}
and norm $\left\|\cdot\right\|_{V}:=\sqrt{\left(\cdot,\cdot\right)_{V}}$,
where $\tau$ is a nonnegative number chosen as
\begin{align}
\tau>-F_{\hat{u}}'(x)\ (\mathrm{a}.\mathrm{e}.\ x\in\Omega).\label{select:tau}
\end{align}
Note that, since the norm $\left\|\cdot\right\|_{V}$ monotonically increases with $\tau$, the usual norm $\left\|\nabla\cdot\right\|_{L^{2}(\Omega)}$ is bounded by $\left\|\cdot\right\|_{V}$ for any $\tau$; therefore, $\overline{B}(\hat{u},r_{1};$\,$\|\nabla\cdot\|_{L^{2}(\Omega)})\subset\overline{B}(\hat{u},r_{1};$\,$\|\cdot\|_{V})$ for any $r_{1}\geq 0$ and $\tau\geq 0$.
Moreover, we denote $V^{*}=H^{-1}\left(\Omega\right)$($:=$(dual of $V$)) with the usual sup-norm.
The $L^{2}$-inner product and the $L^{2}$-norm are simply denoted by $\left(\cdot,\cdot\right)$ and $\left\|\cdot\right\|$, respectively, if no confusion arises.
By defining $\mathcal{F}:V\rightarrow V^{*}$ as
\begin{align*}
\left\langle\mathcal{F}(u),v\right\rangle:=\left(\nabla u,\nabla v\right)-\left(F\left(u\right),v\right)~~~{\rm for}~u,v\in V,
\end{align*}
we first re-write \eqref{eq} with \eqref{diri} as
\begin{align}
\mathcal{F}(u)=0~{\rm in}~V^{*},\label{gFproblem}
\end{align}
and discuss the verified numerical computation for \eqref{gFproblem}.
In other words, we first consider the existence of a weak solution to \eqref{eq} with \eqref{diri} (a solution to \eqref{gFproblem} in $V$), and then we discuss its $H^{2}$-regularity if necessary.
The norm bound for the embedding $V\hookrightarrow L^{p}\left(\Omega\right)$ is denoted by $C_{p}$, i.e., $C_{p}$ is a positive number that satisfies
\begin{align}
\left\|u\right\|_{L^{p}(\Omega)}\leq C_{p}\left\|u\right\|_{V}~~~{\rm for~all}~u\in V.\label{embedding}
\end{align}
Since an explicit upper bound for $C_{p}$ is important for the verification theory,
formulas that give such an upper bound are provided in \ref{sec:embedding}.

\subsection{$H_{0}^{1}$ error estimation}
We use the following verification theorem for obtaining $H_{0}^{1}$ error estimations for solutions to \eqref{gFproblem}.

\begin{theo}[\cite{plum2001computer}]\label{plum2001}
Let $\mathcal{F}:V\rightarrow V^{*}$ be a Fr\'echet differentiable operator.
Suppose that $\hat{u}\in V$, and that there exist $\delta>0,\ K>0$, and a non-decreasing function $g$ satisfying
\begin{align}
&\left\|\mathcal{F}\left(\hat{u}\right)\right\|_{V^{*}}\leq\delta,\label{zansa}\\
&\left\|u\right\|_{V}\leq K\left\|\mathcal{F}_{\hat{u}}'u\right\|_{V^{*}}~~~{\rm for~all}~u\in V,\label{inverse}\\
&\left\|\mathcal{F}_{\hat{u}+u}'-\mathcal{F}_{\hat{u}}'\right\|_{\mathcal{B}(V,V^{*})}\leq g\left(\left\|u\right\|_{V}\right)~~~{\rm for~all}~u\in V,\label{lip}
\shortintertext{and}
&g(t)\rightarrow 0~~{\rm as}~~t\rightarrow 0\label{gconv}.
\end{align}
Moreover, suppose that some $\alpha>0$ exists such that
\begin{align*}
\displaystyle \delta\leq\frac{\alpha}{K}-G\left(\alpha\right)~~and~~Kg\left(\alpha\right)<1,
\end{align*}
where $G(t):=\displaystyle \int_{0}^{t}g(s)ds$.
Then, there exists a solution $u\in V$ to the equation $\mathcal{F}(u)=0$ satisfying
\begin{align}
\left\|u-\hat{u}\right\|_{V}\leq\alpha.\label{al}
\end{align}
Furthermore, the solution is unique under the side condition \eqref{al}.
\end{theo}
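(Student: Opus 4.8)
\quad The plan is to recast $\mathcal{F}(u)=0$ as a fixed-point problem for a Newton-type operator on the closed ball of $V$-radius $\alpha$ centred at $\hat{u}$, and then to invoke Banach's fixed-point theorem. The five hypotheses \eqref{zansa}--\eqref{gconv} together with the two scalar inequalities on $\alpha$ are designed precisely so that this operator is a self-map of the ball and a contraction on it.

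First I would record the remainder estimate. Writing $v=u-\hat{u}$ and using the fundamental theorem of calculus for the Fréchet-differentiable map $\mathcal{F}$ (legitimate because \eqref{lip} and \eqref{gconv} make $s\mapsto\mathcal{F}_{\hat{u}+sv}'$ continuous into $\mathcal{B}(V,V^{*})$), one gets
\[
\mathcal{F}(\hat{u}+v)=\mathcal{F}(\hat{u})+\mathcal{F}_{\hat{u}}'v+R(v),\qquad R(v):=\int_{0}^{1}\bigl(\mathcal{F}_{\hat{u}+sv}'-\mathcal{F}_{\hat{u}}'\bigr)v\,ds,
\]
so that, by \eqref{lip} and the substitution $t=s\|v\|_{V}$,
\[
\|R(v)\|_{V^{*}}\le\int_{0}^{1}g\bigl(s\|v\|_{V}\bigr)\|v\|_{V}\,ds=G\bigl(\|v\|_{V}\bigr).
\]
Running the same computation along the segment joining $\hat{u}+v_{2}$ to $\hat{u}+v_{1}$ for $v_{1},v_{2}$ in $B_{\alpha}:=\{v\in V:\|v\|_{V}\le\alpha\}$ — whose $v$-coordinates stay in $B_{\alpha}$ by convexity — and using that $g$ is non-decreasing gives $\|R(v_{1})-R(v_{2})\|_{V^{*}}\le g(\alpha)\|v_{1}-v_{2}\|_{V}$.

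Next I would introduce the operator. Assuming $\mathcal{F}_{\hat{u}}':V\to V^{*}$ is a bijection — in the elliptic setting of \eqref{gFproblem} this is automatic from the Fredholm alternative, since $F_{\hat{u}}'$ acts as multiplication by $q\in L^{\infty}(\Omega)$ and $V\hookrightarrow L^{2}(\Omega)$ is compact, so $\mathcal{F}_{\hat{u}}'$ is a compact perturbation of the Riesz isomorphism $V\to V^{*}$ while \eqref{inverse} forces injectivity with closed range — the a priori bound \eqref{inverse} upgrades to $\|[\mathcal{F}_{\hat{u}}']^{-1}w\|_{V}\le K\|w\|_{V^{*}}$ for all $w\in V^{*}$. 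Define
\[
T:B_{\alpha}\to V,\qquad T(v):=-[\mathcal{F}_{\hat{u}}']^{-1}\bigl(\mathcal{F}(\hat{u})+R(v)\bigr),
\]
and note that $v=T(v)$ is equivalent to $\mathcal{F}(\hat{u}+v)=0$. For the self-map property, $\|T(v)\|_{V}\le K\bigl(\delta+G(\alpha)\bigr)\le\alpha$ by \eqref{zansa}, the remainder bound, $g$ non-decreasing, and the hypothesis $\delta\le\alpha/K-G(\alpha)$. For the contraction property, $\|T(v_{1})-T(v_{2})\|_{V}\le K\|R(v_{1})-R(v_{2})\|_{V^{*}}\le Kg(\alpha)\|v_{1}-v_{2}\|_{V}$ with $Kg(\alpha)<1$. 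Banach's fixed-point theorem then produces a unique $v^{*}\in B_{\alpha}$ with $T(v^{*})=v^{*}$; setting $u=\hat{u}+v^{*}$ gives a solution of $\mathcal{F}(u)=0$ with $\|u-\hat{u}\|_{V}\le\alpha$, and uniqueness of the fixed point in $B_{\alpha}$ is exactly the uniqueness asserted under \eqref{al}.

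I expect the only non-routine point to be the one flagged above: deducing that $\mathcal{F}_{\hat{u}}'$ is onto, hence that \eqref{inverse} actually bounds the inverse operator, since the a priori estimate by itself yields only injectivity with closed range and a mere lower bound need not imply surjectivity. Once surjectivity is secured — via the compact-perturbation/Fredholm argument available here — everything else is the standard Newton--Kantorovich contraction scheme, and the two scalar conditions on $\alpha$ are engineered so that the self-map and contraction estimates close.
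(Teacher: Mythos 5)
Your argument is correct and coincides with the standard proof of this result: the paper itself states Theorem \ref{plum2001} without proof (it is imported from \cite{plum2001computer}), and the argument there is exactly your Newton-type reformulation $T(v)=-[\mathcal{F}_{\hat{u}}']^{-1}\bigl(\mathcal{F}(\hat{u})+R(v)\bigr)$ with the remainder bound $\|R(v)\|_{V^{*}}\le G(\|v\|_{V})$, the self-map estimate $K(\delta+G(\alpha))\le\alpha$, the contraction constant $Kg(\alpha)<1$, and Banach's fixed-point theorem. The surjectivity subtlety you flag is genuine but is resolved in this paper outside Theorem \ref{plum2001}, namely by Theorem \ref{invtheo}, which in the concrete elliptic setting (where $\mathcal{F}_{\hat{u}}'$ is a compact perturbation of the Riesz isomorphism $\Phi$) establishes that $\mathcal{F}_{\hat{u}}^{\prime-1}$ exists on all of $V^{*}$ with norm at most $K$ — precisely the Fredholm-type argument you sketch.
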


Note that, the Fr\'echet derivative $\mathcal{F}_{\hat{u}}'$ of $\mathcal{F}$ at $\hat{u}\in V$ is given by
\begin{align*}
\left\langle\mathcal{F}_{\hat{u}}'u,v\right\rangle=\left(\nabla u,\nabla v\right)-\left(F_{\hat{u}}'u,v\right)~~~{\rm for}~u,v\in V.
\end{align*}
\subsubsection*{Residual bound $\delta$}
For $\hat{u}\in V$ that satisfies $\Delta\hat{u}\in L^{2}\left(\Omega\right)$, the residual bound $\delta$ is computed as
\begin{align*}
C_{2}\left\|\Delta\hat{u}+F(\hat{u})\right\|_{L^{2}\left(\Omega\right)}\left(\leq\delta\right);
\end{align*}
the $L^{2}$-norm can be computed by a numerical integration method with verification.

\subsubsection*{Bound $K$ for the operator norm of $\mathcal{F}_{\hat{u}}^{\prime-1}$}
We compute a bound $K$ for the operator norm of $\mathcal{F}_{\hat{u}}^{\prime-1}$ by the following theorem,
proving simultaneously that this inverse operator exists and is defined on the whole of $V^{*}$.
\begin{theo}[\cite{plum2009computer}]\label{invtheo}
Let $\Phi:V\rightarrow V^{*}$ be the canonical isometric isomorphism, i.e., $\Phi$ is given by
\begin{align*}
\left\langle\Phi u,v\right\rangle:=\left(u,v\right)_{V}~~~{\rm for}~u,v\in V.
\end{align*}
If the point spectrum of $\Phi^{-1}\mathcal{F}_{\hat{u}}'$~$($denoted by $\sigma_{p}(\Phi^{-1}\mathcal{F}_{\hat{u}}') )$ does not contain zero, then the inverse of $\mathcal{F}_{\hat{u}}'$ exists and
\begin{align}
\left\|\mathcal{F}_{\hat{u}}^{\prime-1}\right\|_{B(V^{*},V)}\leq\mu_{0}^{-1},\label{Ktheo}
\end{align}
where
\begin{align}
\displaystyle \mu_{0}=\min\left\{|\mu|\ :\ \mu\in\sigma_{p}\left(\Phi^{-1}\mathcal{F}_{\hat{u}}'\right)\cup\{1\}\right\}.\label{mu0}
\end{align}
\end{theo}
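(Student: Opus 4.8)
The plan is to realize the operator $A:=\Phi^{-1}\mathcal{F}_{\hat{u}}'\colon V\to V$ as a compact self-adjoint perturbation of the identity, and then read off every claim from the spectral theorem. First I would unfold the definitions: for $u,v\in V$,
\[
(Au,v)_{V}=\langle\mathcal{F}_{\hat{u}}'u,v\rangle=(\nabla u,\nabla v)-(qu,v)=(u,v)_{V}-\bigl((\tau+q)u,v\bigr),
\]
using $(\nabla u,\nabla v)=(u,v)_{V}-\tau(u,v)$ together with \eqref{Fdivcond}. Hence $A=I-K$, where $K\colon V\to V$ is defined by $(Ku,v)_{V}=\bigl((\tau+q)u,v\bigr)$; this $K$ is well defined via the Riesz representation theorem, because $\tau+q\in L^{\infty}(\Omega)$ and \eqref{embedding} make $v\mapsto\bigl((\tau+q)u,v\bigr)$ a bounded linear functional on $V$.

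Next I would establish the two structural properties of $K$. Self-adjointness on $(V,(\cdot,\cdot)_{V})$ is immediate since $\tau+q$ is real-valued: $(Ku,v)_{V}=\bigl((\tau+q)u,v\bigr)=\bigl((\tau+q)v,u\bigr)=(Kv,u)_{V}$. Compactness follows by factoring $K=\Phi^{-1}\,j\,M_{\tau+q}\,\iota$, where $\iota\colon V\hookrightarrow L^{2}(\Omega)$ and $j\colon L^{2}(\Omega)\hookrightarrow V^{*}$ are the natural embeddings, $M_{\tau+q}$ denotes multiplication by $\tau+q$, and $\Phi^{-1}\colon V^{*}\to V$ is the isometric isomorphism: here $\iota$ is compact by the Rellich--Kondrachov theorem because $\Omega$ is a bounded domain, while $j$, $M_{\tau+q}$, and $\Phi^{-1}$ are bounded, and a bounded operator composed with a compact one is compact.

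Then I would invoke the spectral theory of compact self-adjoint operators. The spectrum $\sigma(K)$ is real, is contained in a bounded interval, and consists of $0$ together with an at most countable family of eigenvalues accumulating only at $0$; hence $\sigma(A)=1-\sigma(K)$ is a compact real set, each point of which other than $1$ is an eigenvalue of $A$, and $1\in\sigma(A)$ always holds since $A-I=-K$ cannot be invertible on the infinite-dimensional space $V$. Therefore $\sigma(A)=\sigma_{p}(A)\cup\{1\}$. Combined with $0\neq 1$, the hypothesis $0\notin\sigma_{p}(\Phi^{-1}\mathcal{F}_{\hat{u}}')$ now yields $0\notin\sigma(A)$, so $A$ is boundedly invertible; and because $A$ is self-adjoint, $\|A^{-1}\|_{B(V,V)}=\bigl(\operatorname{dist}(0,\sigma(A))\bigr)^{-1}$. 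Since $\sigma(A)$ is compact and misses $0$, this distance is attained and equals $\min\{|\mu|:\mu\in\sigma_{p}(\Phi^{-1}\mathcal{F}_{\hat{u}}')\cup\{1\}\}=\mu_{0}$ of \eqref{mu0}.

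Finally, from $\mathcal{F}_{\hat{u}}'=\Phi A$ I conclude that $\mathcal{F}_{\hat{u}}^{\prime-1}=A^{-1}\Phi^{-1}$ exists on all of $V^{*}$, and for every $g\in V^{*}$,
\[
\|\mathcal{F}_{\hat{u}}^{\prime-1}g\|_{V}=\|A^{-1}\Phi^{-1}g\|_{V}\le\|A^{-1}\|_{B(V,V)}\,\|\Phi^{-1}g\|_{V}=\mu_{0}^{-1}\|g\|_{V^{*}},
\]
where the last equality uses that $\Phi$, hence $\Phi^{-1}$, is an isometry; this is precisely \eqref{Ktheo}. \textbf{The main obstacle} is the compactness of $K$, equivalently of the embedding $V\hookrightarrow L^{2}(\Omega)$, on which the entire spectral decomposition rests; the two remaining points needing a little care are verifying that the essential spectrum of $A$ is exactly $\{1\}$ — so that a hypothesis on the point spectrum alone controls all of $\sigma(A)$ — and that the infimum defining $\mu_{0}$ is a genuine minimum, which is guaranteed by $\sigma(A)$ being compact and bounded away from $0$.
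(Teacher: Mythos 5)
Your argument is correct. Note first that the paper itself gives no proof of Theorem \ref{invtheo}: it is quoted from \cite{plum2009computer}, so there is no internal proof to compare against, and what you have written is essentially the standard argument behind Plum's result. The decomposition $\Phi^{-1}\mathcal{F}_{\hat u}'=I-K$ with $(Ku,v)_{V}=((\tau+q)u,v)_{L^{2}(\Omega)}$ is exactly the right way to see why the statement is phrased in terms of $\sigma_{p}(\Phi^{-1}\mathcal{F}_{\hat u}')\cup\{1\}$: the compact self-adjoint perturbation $K$ forces the essential spectrum of $\Phi^{-1}\mathcal{F}_{\hat u}'$ to be $\{1\}$, and the Riesz--Schauder theorem makes every other spectral point an eigenvalue, so a hypothesis on the point spectrum alone controls all of $\sigma(\Phi^{-1}\mathcal{F}_{\hat u}')$. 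Your compactness factorization through the embedding $V\hookrightarrow L^{2}(\Omega)$ is sound (Rellich--Kondrachov requires no boundary regularity for $H_{0}^{1}$ of a bounded domain), the self-adjointness of $K$ in $(\cdot,\cdot)_{V}$ is immediate as you say, and the identity $\|A^{-1}\|_{B(V,V)}=\mathrm{dist}(0,\sigma(A))^{-1}$ for the bounded self-adjoint $A=\Phi^{-1}\mathcal{F}_{\hat u}'$ yields \eqref{Ktheo} after composing with the isometry $\Phi^{-1}$. Two minor remarks: your proof uses only that $\tau+q\in L^{\infty}(\Omega)$ is real-valued, so the positivity condition \eqref{select:tau} is not needed at this stage (the paper needs it later to turn \eqref{eiglam} into a regular eigenvalue problem); and since $1\in\sigma(A)$ is automatic from compactness of $K$ on the infinite-dimensional space $V$, the set $\sigma_{p}(A)\cup\{1\}$ is closed, the minimum in \eqref{mu0} is attained, and it coincides with $\mathrm{dist}(0,\sigma(A))$, as you correctly observe.
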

The eigenvalue problem $\Phi^{-1}\mathcal{F}_{\hat{u}}'u=\mu u$ in $V$ is equivalent to
\begin{align*}
\left(\nabla u,\nabla v\right)-\left(F_{\hat{u}}'u,v\right)=\mu\left(u,v\right)_{V}~~{\rm for~all}~v\in V.
\end{align*}
Since $\mu=1$ is already known to be in $\sigma\left(\Phi^{-1}\mathcal{F}_{\hat{u}}'\right)$, it suffices to look for eigenvalues $\mu\neq 1$.
By setting $\lambda=(1-\mu)^{-1}$, we further transform this eigenvalue problem into
\begin{align}
{\rm Find}~u\in V~{\rm and}~\lambda\in \mathbb{R}~{\rm s.t.}~\left(u,v\right)_{V}=\lambda\left((\tau+F_{\hat{u}}')u,v\right)~~{\rm for~all}~v\in V.\label{eiglam}
\end{align}
Owing to \eqref{select:tau}, \eqref{eiglam} is a regular eigenvalue problem, the spectrum of which consists of a sequence $\{\lambda_{k}\}_{k=1}^{\infty}$ of eigenvalues converging to $+\infty$.
In order to compute $K$ on the basis of Theorem \ref{invtheo},
we concretely enclose the eigenvalue $\lambda$ of \eqref{eiglam} that minimizes the corresponding absolute value of $|\mu|\left(=|1-\lambda^{-1}|\right)$, by considering the following approximate eigenvalue problem
\begin{align}
{\rm Find}~u\in V_{N}~{\rm and}~\lambda^{N}\in \mathbb{R}~{\rm s.t.}~\left(u_{N},v_{N}\right)_{V}=\lambda^{N}\left((\tau+F_{\hat{u}}')u_{N},v_{N}\right)~~{\rm for~all}~v_{N}\in V_{N},\label{applam}
\end{align}
where $V_{N}$ is a finite-dimensional subspace of $V$.

We estimate the error between the $k$th eigenvalue $\lambda_{k}$ of \eqref{eiglam} and the $k$th eigenvalue $\lambda_{k}^{N}$ of \eqref{applam},
by considering the weak formulation of the Poisson equation
\begin{align}
\left(u,v\right)_{V}=\left(h,v\right)_{L^2(\Omega)}~~~{\rm for~all}~v\in V\label{poisson}
\end{align}
for given $h\in L^{2}\left(\Omega\right)$; it is well known that this equation has a unique solution $u\in V$ for each $h\in L^{2}\left(\Omega\right)$.
Moreover, we introduce the orthogonal projection $P_{N}^{\tau}:V\rightarrow V_{N}$ defined by
\begin{align*}
\left(P_{N}^{\tau}u-u,v_{N}\right)_{V}=0~~~{\rm for~all}~u\in V{\rm~and~}v_{N}\in V_{N}.
\end{align*}
The following theorem enables us to estimate the error between $\lambda_{k}$ and $\lambda_{k}^{N}$.
\begin{theo}[\cite{tanaka2014verified, liu2015framework}]\label{eigtheo}
Suppose that there exists a positive number $C_{N}^{\tau}$ such that
\begin{align}
\left\|u_{h}-P_{N}^{\tau}u_{h}\right\|_{V}\leq C_{N}^{\tau}\left\|h\right\|_{L^{2}(\Omega)}\label{CN}
\end{align}
for any $h\in L^{2}\left(\Omega\right)$ and the corresponding solution $u_{h}\in V$ to \eqref{poisson}.
Then,
\begin{align*}
\displaystyle \frac{\lambda_{k}^{N}}{\lambda_{k}^{N}\left(C_{N}^{\tau}\right)^{2}\|\tau+F_{\hat{u}}'\|_{L^{\infty}(\Omega)}+1}\leq\lambda_{k}\leq\lambda_{k}^{N},
\end{align*}
where $F_{\hat{u}}'$ is regarded as an $L^{\infty}$ function owing to \eqref{Fdivcond}.
\end{theo}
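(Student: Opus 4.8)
The plan is to recast the generalized eigenvalue problem \eqref{eiglam} and its Galerkin restriction \eqref{applam} in terms of a single compact self-adjoint operator on $V$, and then to turn the hypothesis \eqref{CN} into a \emph{sharp} norm bound for a finite-rank perturbation of that operator. Put $a(u,v):=(u,v)_V$ and $b(u,v):=\left((\tau+F_{\hat{u}}')u,v\right)_{L^2(\Omega)}$; by \eqref{select:tau} the form $b$ is symmetric and positive definite, with $b(u,u)\le M\|u\|_{L^2(\Omega)}^2$ where $M:=\|\tau+F_{\hat{u}}'\|_{L^\infty(\Omega)}$. Let $T:V\to V$ be given by $(Tu,v)_V=b(u,v)$ for all $v\in V$; then $T$ is self-adjoint, positive, and compact (the embedding $V\hookrightarrow L^2(\Omega)$ is compact since $\Omega$ is bounded). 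Now \eqref{eiglam} is $(u,v)_V=\lambda(Tu,v)_V$ for all $v\in V$, so $\mu_k:=\lambda_k^{-1}$ are exactly the (decreasing, positive) eigenvalues of $T$, and similarly $\mu_k^N:=(\lambda_k^N)^{-1}$ are the nonzero eigenvalues of the compression $P_N^\tau T P_N^\tau$, i.e.\ of $P_N^\tau T|_{V_N}$. The upper bound $\lambda_k\le\lambda_k^N$ then follows at once from the Courant--Fischer min--max principle because $V_N\subseteq V$.

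For the lower bound, I would first extract a duality consequence of \eqref{CN}. Write $Q:=I-P_N^\tau$. For $u\in V$ and $h\in L^2(\Omega)$ with solution $u_h$ of \eqref{poisson}, testing \eqref{poisson} with $Qu\in V$ gives $(Qu,h)_{L^2(\Omega)}=(u_h,Qu)_V=(Qu_h,Qu)_V$, the last step using $Qu\perp_V V_N$; hence $|(Qu,h)_{L^2(\Omega)}|\le\|Qu_h\|_V\|Qu\|_V\le C_N^\tau\|h\|_{L^2(\Omega)}\|Qu\|_V$ by \eqref{CN}, and a supremum over $h$ yields
\[
\|Qu\|_{L^2(\Omega)}\le C_N^\tau\,\|Qu\|_V\qquad\text{for all }u\in V.
\]

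The decisive step is then a sharp eigenvalue perturbation estimate. With $X:=P_N^\tau T^{1/2}$ one has $XX^*=P_N^\tau T P_N^\tau$ and $X^*X=T^{1/2}P_N^\tau T^{1/2}$, and since $XX^*$ and $X^*X$ share the same nonzero spectrum, $\mu_k(T^{1/2}P_N^\tau T^{1/2})=\mu_k^N$ for $1\le k\le\dim V_N$. Moreover $T-T^{1/2}P_N^\tau T^{1/2}=T^{1/2}QT^{1/2}\ge0$, and $\|T^{1/2}QT^{1/2}\|=\|QT^{1/2}\|^2=\|T^{1/2}Q\|^2$ where
\[
\|T^{1/2}Qv\|_V^2=(TQv,Qv)_V=b(Qv,Qv)\le M\|Qv\|_{L^2(\Omega)}^2\le M(C_N^\tau)^2\|v\|_V^2
\]
by the displayed bound above, so $\|T^{1/2}QT^{1/2}\|\le M(C_N^\tau)^2$. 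Weyl's monotonicity and perturbation inequalities (or, equivalently, a direct min--max argument on the span of the first $k$ eigenfunctions of \eqref{eiglam} using the $V$-orthogonal splitting $T^{1/2}u=P_N^\tau T^{1/2}u+QT^{1/2}u$) then give $0\le\mu_k-\mu_k^N\le\|T^{1/2}QT^{1/2}\|\le M(C_N^\tau)^2$. Inserting $\mu_k=\lambda_k^{-1}$, $\mu_k^N=(\lambda_k^N)^{-1}$ and rearranging produces precisely $\lambda_k\ge\lambda_k^N/\bigl(\lambda_k^N(C_N^\tau)^2M+1\bigr)$.

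The main obstacle is obtaining the denominator in exactly this form. A naive Rayleigh-quotient argument -- project the first $k$ eigenfunctions of \eqref{eiglam} into $V_N$ and bound $b(P_N^\tau u,P_N^\tau u)$ from below via the triangle inequality for the $b$-seminorm -- delivers only the weaker bound $\lambda_k\ge\lambda_k^N/\bigl(1+\sqrt{\lambda_k^N M}\,C_N^\tau\bigr)^2$, the obstruction being that the cross term $b(P_N^\tau u,Qu)$ need not vanish. Passing to the symmetrized operator $T^{1/2}P_N^\tau T^{1/2}$ (which shares the nonzero spectrum of $P_N^\tau T P_N^\tau$) replaces this by a genuinely $V$-orthogonal splitting and removes the cross term, recovering the sharp constant. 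Two further points to watch are the index range $1\le k\le\dim V_N$ in the identity $\mu_k(T^{1/2}P_N^\tau T^{1/2})=\mu_k^N$, and the fact that $b$ is positive \emph{definite} rather than merely semidefinite -- guaranteed by \eqref{select:tau} -- which is what makes the spectra of $T$ and $P_N^\tau T P_N^\tau$ discrete with all eigenvalues positive, so that $\mu_k=\lambda_k^{-1}$ is meaningful.
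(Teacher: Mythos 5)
Your proof is correct, but it is a genuinely different (and more self-contained) route than what the paper offers: for this theorem the paper only establishes the right-hand inequality $\lambda_k\leq\lambda_k^N$ as the Rayleigh--Ritz consequence of the min--max principle (exactly as you do) and defers the left-hand inequality entirely to \cite[Theorem 4]{tanaka2014verified}, which assumes $H^2$-regularity of solutions to \eqref{poisson}, and to \cite[Theorem 2.1]{liu2015framework}. Your duality step converting \eqref{CN} into $\|(I-P_N^{\tau})v\|_{L^2(\Omega)}\leq C_N^{\tau}\|(I-P_N^{\tau})v\|_V$ for all $v\in V$ is the same Aubin--Nitsche-type ingredient that underlies those reference proofs, but whereas they work through the min--max principle applied to the projection of the span of the first $k$ exact eigenfunctions into $V_N$ (and must handle the possible degeneracy of that projection), you package the lower bound as a Weyl-type perturbation estimate: identifying $(\lambda_k^N)^{-1}$ with the eigenvalues of $T^{1/2}P_N^{\tau}T^{1/2}$ via the $XX^*$/$X^*X$ trick and bounding $\|T^{1/2}(I-P_N^{\tau})T^{1/2}\|\leq(C_N^{\tau})^2\|\tau+F_{\hat{u}}'\|_{L^{\infty}(\Omega)}$ gives $\lambda_k^{-1}\leq(\lambda_k^N)^{-1}+(C_N^{\tau})^2\|\tau+F_{\hat{u}}'\|_{L^{\infty}(\Omega)}$, which rearranges to the stated bound; like \cite{liu2015framework}, this needs no $H^2$-regularity, and it sidesteps the injectivity question for $P_N^{\tau}$ on the exact eigenspace. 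One quibble with your commentary only (not your proof): the ``naive'' projected-Rayleigh-quotient route is not intrinsically stuck at the weaker constant $\bigl(1+\sqrt{\lambda_k^N M}\,C_N^{\tau}\bigr)^2$; writing $\|P_N^{\tau}u\|_V^2=\|u\|_V^2-\|(I-P_N^{\tau})u\|_V^2$ (by $V$-orthogonality), bounding $b(P_N^{\tau}u,P_N^{\tau}u)^{1/2}\geq b(u,u)^{1/2}-\sqrt{M}\,C_N^{\tau}\|(I-P_N^{\tau})u\|_V$ with $M=\|\tau+F_{\hat{u}}'\|_{L^{\infty}(\Omega)}$, and then maximizing the resulting quotient over the value of $\|(I-P_N^{\tau})u\|_V$ recovers exactly the sharp denominator $1+\lambda_k^N(C_N^{\tau})^2M$ -- which is essentially how the cited proofs reach it -- so your symmetrization is an elegant convenience rather than a necessity; also keep, as you note, the implicit restriction $k\leq\dim V_N$ and the role of \eqref{select:tau} in making all $\lambda_k^N$ positive.
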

The inequality on the right is well known as a Rayleigh-Ritz bound, which is derived from the min-max principle:
\begin{align*}
\displaystyle \lambda_{k}=\min_{H_{k}\subset V}\left(\max_{v\in H_{k}\backslash\{0\}}\frac{\left\|v\right\|_{V}^{2}}{\left\|av\right\|_{L^{2}(\Omega)}^{2}}\right)\leq\lambda_{k}^{N},
\end{align*}
where we set $a=\sqrt{\tau+F_{\hat{u}}'}$ and the minimum is taken over all $k$-dimensional subspaces $H_{k}$ of $V$.
Moreover, proofs of the inequality on the left can be found in \cite{tanaka2014verified, liu2015framework}.
Assuming the $H^{2}$-regularity of solutions to \eqref{poisson} (e.g., when $\Omega$ is convex {\rm \cite[Section 3.3]{grisvard2011elliptic}}), \cite[Theorem 4]{tanaka2014verified} ensures the left inequality.
A more general statement, that does not require the $H^{2}$-regularity, can be found in \cite[Theorem 2.1]{liu2015framework}.

\begin{rem}
When the $H^{2}$-regularity of solutions to \eqref{poisson} is confirmed a priori, e.g., when $\Omega$ is convex {\rm \cite[Section 3.3]{grisvard2011elliptic}}, \eqref{CN} can be replaced by
\begin{align}
\left\|u-P_{N}^{\tau}u\right\|_{V}\leq C_{N}^{\tau}\left\|-\Delta u+\tau u\right\|_{L^{2}(\Omega)}~~~{\rm for~all}~u\in H^{2}(\Omega)\cap V.
\end{align}
We can compute an explicit value of $C_{N}^{0}$ with $\tau=0$, for $V_{N}$ spanned by a Legendre basis $\left\{\phi_{i}\right\}_{i=1}^{N}$ using {\rm \cite[Theorem 2.3]{kimura1999on}}, and may employ $C_{N}^{\tau}=C_{N}^{0}\sqrt{1+\tau(C_{N}^{0})^{2}}$, since
\begin{align*}
\left\|u-P_{N}^{\tau}u\right\|_{\tau}^{2}&\leq\left\|u-P_{N}^{0}u\right\|_{\tau}^{2}=\left\|\nabla\left(u-P_{N}^{0}u\right)\right\|^{2}+\tau\left\|u-P_{N}^{0}u\right\|^{2}\\
&\leq\left\|\nabla\left(u-P_{N}^{0}u\right)\right\|^{2}+\tau(C_{N}^{0})^{2}\left\|\nabla\left(u-P_{N}^{0}u\right)\right\|^{2}=\left(1+\tau(C_{N}^{0})^{2}\right)\left\|\nabla\left(u-P_{N}^{0}u\right)\right\|^{2}\\
&\leq\left(1+\tau(C_{N}^{0})^{2}\right)(C_{N}^{0})^{2}\left\|-\Delta u\right\|^{2}\leq\left(1+\tau(C_{N}^{0})^{2}\right)(C_{N}^{0})^{2}\left\|-\Delta u+\tau u\right\|^{2},
\end{align*}
where the last inequality follows from {\rm \cite[Lemma~1]{tanaka2014verified}}.
\end{rem}

\subsubsection*{Lipschitz bound $\mathcal{F}_{\hat{u}}'$}
A concrete construction of a function $g$ satisfying \eqref{lip} and \eqref{gconv} is important for our verification process.
For the nonlinearity $F(u)=au+bu^{p}$ with $p\geq 2$ and $a,b\in L^{\infty}(\Omega)$ (this form is applicable to the concrete nonlinearities in Section \ref{sec:example}), one can employ
\begin{align*}
g(t)=\left\|b\right\|_{L^{\infty}(\Omega)}p(p-1)C_{p+1}^{3}Kt\left(\left\|\hat{u}\right\|_{L^{p+1}(\Omega)}+C_{p+1}t\right)^{p-2},
\end{align*}
where $\delta$ and $K$ are the respective constants in \eqref{zansa} and \eqref{inverse} for $\hat{u}\in V$.
This selection can be found in \cite[Theorem 3.1]{mckenna2009uniqueness}.
\subsection{$L^{\infty}$ error estimation}
In this subsection, we discuss a method that gives an $L^{\infty}$ error bound for a solution to \eqref{eq} with \eqref{diri} from a known $H_{0}^{1}$ error bound, that is, 
we compute an explicit bound for $\left\|u-\hat{u}\right\|_{L^{\infty}\left(\Omega\right)}$ for a solution $u\in V$ to \eqref{eq} with \eqref{diri} satisfying
\begin{align}
\left\|u-\hat{u}\right\|_{V}\leq\rho\label{h10error}
\end{align}
with $\rho>0$ and $\hat{u}\in V$.
We assume that $\Omega$ is convex and polygonal to obtain such an error estimation; this condition gives the $H^{2}$-regularity of solutions to \eqref{eq} with \eqref{diri} (and therefore, ensures their boundedness) a priori.
More precisely, when $\Omega$ is a convex polygonal domain, a weak solution $u\in V$ to \eqref{poisson} and $h\in L^{2}\left(\Omega\right)$ is $H^{2}$-regular (see, e.g., \cite[Section 3.3]{grisvard2011elliptic}).
A solution $u$ satisfying \eqref{h10error} can be written in the form $ u=\hat{u}+\rho\omega$ with some $\omega\in V,\ \left\|\omega\right\|_{V}\leq 1$.
Moreover, $\omega$ satisfies
\begin{align*}
\left\{\begin{array}{l l}
-\Delta\rho\omega=F\left(\hat{u}+\rho\omega\right)+\Delta\hat{u} &\mathrm{i}\mathrm{n}\ \Omega,\\
\omega=0 &\mathrm{on}\ \partial\Omega,
\end{array}\right.
\end{align*}
and therefore is also $H^{2}$-regular if $\Delta\hat{u}\in L^{2}(\Omega)$.
We then use the following theorem to obtain an $L^{\infty}$ error estimation.
\begin{theo}[\cite{plum1992explicit}]\label{linf}
For all $u\in H^{2}\left(\Omega\right)$,
\begin{align*}
\|u\|_{L^{\infty}(\Omega)}\le c_{0}\|u\|_{L^{2}(\Omega)}+c_{1}\|\nabla u\|_{L^{2}(\Omega)}+c_{2}\|u_{xx}\|_{L^{2}(\Omega)}
\end{align*}
with
\begin{align*}
c_{j}=\displaystyle \frac{\gamma_{j}}{\left|\overline{\Omega}\right|}\left[\max_{x_{0}\in\overline{\Omega}}\int_{\overline{\Omega}}|x-x_{0}|^{2j}dx\right]^{1/2},~(j=0,1,2),
\end{align*}
where $u_{xx}$ denotes the Hesse matrix of $u,\ \left|\overline{\Omega}\right|$ is the measure of $\overline{\Omega}$, and
\begin{align*}
\gamma_{0}=1,~\gamma_{1}=1.1548,~\gamma_{2}=0.22361.
\end{align*}
For $n=3$, other values of $\gamma_{0},~\gamma_{1},$ and $\gamma_{2}$ have to be chosen $($see {\rm \cite{plum1992explicit}}$)$.
\end{theo}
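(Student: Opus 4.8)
The statement is a quantitative Sobolev embedding $H^{2}(\Omega)\hookrightarrow L^{\infty}(\Omega)$ with fully explicit constants, which is available precisely because $n\le 3$. The plan is to establish the pointwise inequality at an arbitrary fixed point $x_{0}\in\overline{\Omega}$ and then take the supremum over $x_{0}$. First I would reduce to $u\in C^{\infty}(\overline{\Omega})$ by density: $C^{\infty}(\overline{\Omega})$ is dense in $H^{2}(\Omega)$, the right-hand side is continuous with respect to $H^{2}$-convergence, and the embedding $H^{2}(\Omega)\hookrightarrow C(\overline{\Omega})$ (valid for $n\le 3$) lets the left-hand side pass to the limit as well. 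Hence $u$ may be assumed smooth.

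Fix $x_{0}\in\overline{\Omega}$. Since $\Omega$ is convex in the setting of this section (the fully general statement can be recovered from Plum's kernel construction, but convexity makes the argument transparent), for every $x\in\Omega$ the segment $\{x+t(x_{0}-x):t\in[0,1]\}$ lies in $\overline{\Omega}$, so Taylor's formula with integral remainder applies along it:
\begin{align*}
u(x_{0})=u(x)+\nabla u(x)\cdot(x_{0}-x)+\int_{0}^{1}(1-t)\,(x_{0}-x)^{\!\top}u_{xx}\bigl(x+t(x_{0}-x)\bigr)(x_{0}-x)\,dt .
\end{align*}
Averaging this identity over $x\in\Omega$ against the normalized weight $|\overline{\Omega}|^{-1}$ writes $u(x_{0})$ as $A_{0}+A_{1}+A_{2}$, where $A_{0}$ is a bounded linear functional of $u$, $A_{1}$ of $\nabla u$, and $A_{2}$ of $u_{xx}$. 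In the remainder term $A_{2}$ I would, for each fixed $t$, substitute $y=(1-t)x+tx_{0}$; by convexity $y$ ranges over $(1-t)\Omega+tx_{0}\subset\Omega$, one has $x_{0}-x=(1-t)^{-1}(x_{0}-y)$ and $dx=(1-t)^{-n}dy$, so that the evaluation point of $u_{xx}$ is moved onto $\Omega$ itself at the cost of a factor $(1-t)^{-(n+1)}$ together with some powers of $|x_{0}-y|$.

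Each of the three functionals is then estimated by the Cauchy--Schwarz inequality in $L^{2}(\Omega)$. For $A_{0}$ and $A_{1}$ this is immediate and produces exactly the geometric factor $\bigl[\max_{x_{0}\in\overline{\Omega}}\int_{\overline{\Omega}}|x-x_{0}|^{2j}\,dx\bigr]^{1/2}$ for $j=0,1$, with $\gamma_{0}=1$; for $A_{2}$, after Cauchy--Schwarz on the inner $y$-integral one is left with $\bigl(\int_{0}^{1}(1-t)^{(2-n)/2}\,dt\bigr)\bigl[\int_{\overline{\Omega}}|x-x_{0}|^{4}\,dx\bigr]^{1/2}$, and the $t$-integral converges exactly when $n\le 3$ --- this is the reason for the dimensional restriction and for the fact that $n=3$ forces different numerical values. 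Collecting the three bounds, dividing by $|\overline{\Omega}|$, and taking the supremum over $x_{0}$ gives the asserted inequality with some admissible triple of constants; recovering Plum's specific values $\gamma_{1}=1.1548$, $\gamma_{2}=0.22361$ requires replacing the uniform average by an optimized weight (equivalently, a more carefully chosen representation kernel, as in \cite{plum1992explicit}) and solving the resulting one-dimensional optimization.

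I expect the main obstacle to be precisely this last optimization. A crude form of the argument above yields the inequality essentially for free, but extracting the small explicit constants --- in particular $\gamma_{2}$, which has to beat the naive value of the $t$-integral coming from the Hessian term --- needs the sharp choice of weight and remainder representation, plus the additional bookkeeping of redoing every estimate for $n=3$, where the remainder integral is still convergent but takes a different value. If one preferred not to assume convexity, the segment-Taylor step would have to be replaced by the kernel construction of \cite{plum1992explicit}; in the present context convexity is available, so the averaged-Taylor route is the natural one.
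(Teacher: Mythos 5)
The paper itself does not prove Theorem \ref{linf}: it is quoted from \cite{plum1992explicit}, so the only proof to compare against is Plum's original one. Your outline reproduces the standard skeleton of such explicit $H^2\hookrightarrow L^\infty$ bounds --- reduction to smooth $u$ by density, Taylor expansion with integral remainder along the segment from $x$ to $x_0$ (legitimate here since $\Omega$ is convex in the setting where the theorem is applied), averaging over $x\in\Omega$, the substitution $y=(1-t)x+tx_0$, and Cauchy--Schwarz --- and you correctly locate the origin of the restriction $n\le 3$ in the integrability of $(1-t)^{(2-n)/2}$. As a derivation of \emph{some} inequality of the stated form this is sound.

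The genuine gap is that the statement being proved is not ``some admissible triple of constants'' but the specific values $\gamma_0=1$, $\gamma_1=1.1548$, $\gamma_2=0.22361$, and these are exactly what the rest of the paper consumes: they yield the explicit $c_j$ for $\Omega=(0,1)^2$ that enter Corollaries \ref{Linfcoro1} and \ref{Linfcoro3} and hence the verified radii $r_2$. Your argument, as you yourself concede, stops short of producing them, deferring to an unspecified ``optimization''. Note also that the uniform-average Taylor route cannot be massaged into the stated triple by minor tuning: for $n=2$ it gives $\gamma_0=\gamma_1=\gamma_2=1$, i.e.\ a \emph{smaller} first-order constant but a Hessian constant more than four times larger than $0.22361$, so the stated values (numerically $2/\sqrt{3}$ and $1/(2\sqrt{5})$) reflect a genuinely different weighted representation formula --- with $t$-dependent kernels on the zeroth-, first- and second-order terms and Cauchy--Schwarz applied accordingly --- in which $\gamma_1$ is deliberately worsened in exchange for a much smaller $\gamma_2$, and which must be re-evaluated for $n=3$. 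Without carrying out that kernel construction and the accompanying bookkeeping (this is the actual content of \cite{plum1992explicit}), the theorem as stated, with its particular constants, is not established.
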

\begin{rem}
The norm of the Hesse matrix of $u$ is precisely defined by
\begin{align*}
\|u_{xx}\|_{L^{2}(\Omega)}=\sqrt{\sum_{i,j=1}^{2}\left\|\frac{\partial^{2}u}{\partial x_{i}\partial x_{j}}\right\|_{L^{2}(\Omega)}^{2}}.
\end{align*}
Moreover, since $\Omega$ is polygonal, $\left\|u_{xx}\right\|_{L^{2}(\Omega)}=\left\|\Delta u\right\|_{L^{2}(\Omega)}$ for all $u\in H^{2}(\Omega)\cap V$~$($see, e.g., {\rm \cite{grisvard2011elliptic}}$)$.
\end{rem}
\begin{rem}
Explicit values of each $c_{j}$ are provided for some special domains $\Omega$ in {\rm \cite{plum1992explicit,plum2001computer}}.
According to these papers, one can choose, for $\Omega=(0,1)^{2}$,
\begin{align*}
c_{0}=\displaystyle \gamma_{0},\ c_{1}=\sqrt{\frac{2}{3}}\gamma_{1},{\rm~and~}c_{2}=\frac{\gamma_{3}}{3}\sqrt{\frac{28}{5}} .
\end{align*}
\end{rem}
Applying Theorem \ref{linf}, we obtain the following corollaries.

\begin{coro}\label{Linfcoro1}
Let $u$ be a solution to \eqref{absproblem} with $p\geq 2$, satisfying \eqref{h10error} for $\hat{u}\in V$ such that $\Delta\hat{u}\in L^{2}\left(\Omega\right)$.
Moreover, let $c_{0},\ c_{1}$, and $c_{2}$ be as in Theorem {\rm \ref{linf}}.
Then,
\begin{align}
&\left\|u-\hat{u}\right\|_{L^{\infty}(\Omega)}\nonumber\\
&\leq c_{0}C_{2}\rho+c_{1}\rho+c_{2}\left(2^{p-\frac{3}{2}}p\rho C_{3}\sqrt{\left\|\hat{u}\right\|_{L^{6(p-1)}\left(\Omega\right)}^{2\left(p-1\right)}+\frac{\rho^{2(p-1)}}{2p-1}C_{6(p-1)}^{2\left(p-1\right)}}+\left\|\Delta\hat{u}+\left|\hat{u}\right|^{p-1}\hat{u}\right\|_{L^{2}(\Omega)}\right).\label{coroinequ1}
\end{align}
\end{coro}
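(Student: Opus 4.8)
The plan is to apply Theorem~\ref{linf} to $w:=u-\hat{u}$ and to estimate separately the three norms it produces. First I would note that $w\in H^{2}(\Omega)\cap V$: writing $u=\hat{u}+\rho\omega$ with $\|\omega\|_{V}\le 1$, the paragraph preceding Theorem~\ref{linf} shows that $w$ solves a Poisson problem whose right-hand side $F(u)+\Delta\hat{u}=|u|^{p-1}u+\Delta\hat{u}$ lies in $L^{2}(\Omega)$ (since $\Delta\hat{u}\in L^{2}(\Omega)$ by hypothesis and $|u|^{p-1}u\in L^{2}(\Omega)$ by the standing assumption on $F$), so convexity of $\Omega$ gives the $H^{2}$-regularity of $w$. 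Theorem~\ref{linf}, together with the remark identifying $\|w_{xx}\|_{L^{2}(\Omega)}=\|\Delta w\|_{L^{2}(\Omega)}$ on polygonal $\Omega$, then yields
\[
\|u-\hat{u}\|_{L^{\infty}(\Omega)}\le c_{0}\|w\|_{L^{2}(\Omega)}+c_{1}\|\nabla w\|_{L^{2}(\Omega)}+c_{2}\|\Delta w\|_{L^{2}(\Omega)}.
\]

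For the first two terms I would use the embedding \eqref{embedding} and $\|\nabla\cdot\|_{L^{2}(\Omega)}\le\|\cdot\|_{V}$ together with $\|w\|_{V}\le\rho$, obtaining $\|w\|_{L^{2}(\Omega)}\le C_{2}\rho$ and $\|\nabla w\|_{L^{2}(\Omega)}\le\rho$. For the third term, since $-\Delta u=|u|^{p-1}u$, I would split
\[
-\Delta w=\bigl(|u|^{p-1}u-|\hat{u}|^{p-1}\hat{u}\bigr)+\bigl(|\hat{u}|^{p-1}\hat{u}+\Delta\hat{u}\bigr),
\]
so that $\|\Delta w\|_{L^{2}(\Omega)}\le\bigl\||u|^{p-1}u-|\hat{u}|^{p-1}\hat{u}\bigr\|_{L^{2}(\Omega)}+\bigl\|\Delta\hat{u}+|\hat{u}|^{p-1}\hat{u}\bigr\|_{L^{2}(\Omega)}$; the last term is precisely the residual appearing in \eqref{coroinequ1}, so everything reduces to bounding the nonlinear difference.

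For that, set $\psi(s)=|s|^{p-1}s$, so $\psi'(s)=p|s|^{p-1}$, and use the mean value representation $\psi(u)-\psi(\hat{u})=p\,w\int_{0}^{1}|\hat{u}+tw|^{p-1}\,dt$. Bounding $|\hat{u}+tw|\le|\hat{u}|+t|w|$, applying the Cauchy--Schwarz inequality in the $t$-integral, then the elementary inequality $(a+b)^{2(p-1)}\le 2^{2p-3}\bigl(a^{2(p-1)}+b^{2(p-1)}\bigr)$ (valid for $p\ge 2$), and finally $\int_{0}^{1}t^{2(p-1)}\,dt=(2p-1)^{-1}$, yields the pointwise estimate
\[
\bigl|\psi(u)-\psi(\hat{u})\bigr|^{2}\le p^{2}2^{2p-3}|w|^{2}\Bigl(|\hat{u}|^{2(p-1)}+\tfrac{1}{2p-1}|w|^{2(p-1)}\Bigr).
\]
Integrating over $\Omega$, applying H\"older's inequality with exponents $3/2$ and $3$ to both $\int_{\Omega}|w|^{2}|\hat{u}|^{2(p-1)}$ and $\int_{\Omega}|w|^{2p}$, and then the embedding \eqref{embedding} with $\|w\|_{V}\le\rho$, gives
\[
\bigl\||u|^{p-1}u-|\hat{u}|^{p-1}\hat{u}\bigr\|_{L^{2}(\Omega)}^{2}\le p^{2}2^{2p-3}C_{3}^{2}\rho^{2}\Bigl(\|\hat{u}\|_{L^{6(p-1)}(\Omega)}^{2(p-1)}+\tfrac{\rho^{2(p-1)}}{2p-1}C_{6(p-1)}^{2(p-1)}\Bigr).
\]
Taking square roots (using $2^{p-3/2}=\sqrt{2^{2p-3}}$) and summing the three contributions produces exactly \eqref{coroinequ1}.

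I expect the delicate point to be the pointwise nonlinear estimate: to recover the precise constant $2^{p-3/2}p$ and the coefficient $(2p-1)^{-1}$ one must apply Cauchy--Schwarz in $t$ \emph{before} splitting the power $(|\hat{u}|+t|w|)^{2(p-1)}$, rather than crudely bounding $|\hat{u}+tw|$ by $|\hat{u}|+|w|$; likewise, the specific H\"older split with exponents $3/2$ and $3$ is what makes the constants $C_{3}$ and $C_{6(p-1)}$ and the norm $\|\hat{u}\|_{L^{6(p-1)}(\Omega)}$ appear. The remaining ingredients --- the $H^{2}$-regularity of $w$, the identity $\|w_{xx}\|_{L^{2}(\Omega)}=\|\Delta w\|_{L^{2}(\Omega)}$, and the Sobolev embedding bounds --- are routine consequences of the results already quoted in this appendix.
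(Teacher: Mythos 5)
Your proposal is correct and follows essentially the same route as the paper's proof: apply Theorem \ref{linf} to $u-\hat{u}=\rho\omega$, split $-\Delta(u-\hat{u})$ into the nonlinear difference plus the residual $\Delta\hat{u}+|\hat{u}|^{p-1}\hat{u}$, and bound the difference via the mean value theorem, Cauchy--Schwarz in $t$, the convexity inequality $(a+b)^{2(p-1)}\le 2^{2p-3}(a^{2(p-1)}+b^{2(p-1)})$, and H\"older with exponents $3/2$ and $3$, yielding exactly the constants $2^{p-3/2}p$, $C_{3}$, $C_{6(p-1)}$, and $(2p-1)^{-1}$. The only (immaterial) difference is that you perform the power-splitting pointwise before integrating over $\Omega$, whereas the paper applies H\"older in $x$ and Minkowski in $L^{6(p-1)}$ first and splits at the level of norms; both give identical bounds.
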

\begin{proof}
Let us denote  $F(u)=\left|u\right|^{p-1}u$ in this proof.
Due to Theorem \ref{linf}, we have
\begin{align*}
\left\|u-\hat{u}\right\|_{L^{\infty}(\Omega)}&=\rho\left\|\omega\right\|_{L^{\infty}(\Omega)}\\
&\leq\rho\left(c_{0}\left\|\omega\right\|_{L^{2}(\Omega)}+c_{1}\left\|\omega\right\|_{V}+c_{2}\left\|\Delta\omega\right\|_{L^{2}(\Omega)}\right)\\
&\leq\rho\left(c_{0}C_{2}+c_{1}+c_{2}\left\|\Delta\omega\right\|_{L^{2}(\Omega)}\right).
\end{align*}
The last term $\left\|\Delta\omega\right\|_{L^{2}(\Omega)}$ is estimated by
\begin{align*}
\rho\left\|\Delta\omega\right\|_{L^{2}(\Omega)}&=\left\|F\left(\hat{u}+\rho\omega\right)+\Delta\hat{u}\right\|_{L^{2}(\Omega)}\\
&=\left\|F\left(\hat{u}+\rho\omega\right)-F\left(\hat{u}\right)+F\left(\hat{u}\right)+\Delta\hat{u}\right\|_{L^{2}(\Omega)}\\
&\leq\left\|F\left(\hat{u}+\rho\omega\right)-F\left(\hat{u}\right)\right\|_{L^{2}(\Omega)}+\left\|\Delta\hat{u}+F\left(\hat{u}\right)\right\|_{L^{2}(\Omega)}.
\end{align*}
Since the mean value theorem ensures that
\begin{align*}
&\displaystyle \int_{\Omega}\left(F\left(\hat{u}+\rho\omega\right)-F\left(\hat{u}\right)\right)^{2}dx\\
=&\displaystyle \int_{\Omega}\left(\rho\omega(x)\int_{0}^{1}F_{\hat{u}+t\rho\omega}'\left(x\right)dt\right)^{2}dx\\
=&\displaystyle \int_{\Omega}\left(\rho\omega(x)\int_{0}^{1}p\left|\hat{u}(x)+\rho t\omega(x)\right|^{p-1}dt\right)^{2}dx\\
=&p^{2}\displaystyle \rho^{2}\int_{\Omega}\omega(x)^{2}\left(\int_{0}^{1}\left|\hat{u}(x)+\rho t\omega(x)\right|^{p-1}dt\right)^{2}dx\\
\leq&p^{2}\displaystyle \rho^{2}\int_{\Omega}\omega(x)^{2}\int_{0}^{1}\left|\hat{u}(x)+\rho t\omega(x)\right|^{2(p-1)}dtdx\\
\leq&p^{2}\displaystyle \rho^{2}\left\|\omega\right\|_{L^{3}\left(\Omega\right)}^{2}\int_{0}^{1}\left\|\left|\hat{u}+\rho\omega t\right|^{2(p-1)}\right\|_{L^{3}\left(\Omega\right)}dt\\
=&p^{2}\displaystyle \rho^{2}\left\|\omega\right\|_{L^{3}\left(\Omega\right)}^{2}\int_{0}^{1}\left\|\hat{u}+\rho\omega t\right\|_{L^{6(p-1)}\left(\Omega\right)}^{2\left(p-1\right)}dt\\
\leq&p^{2}\displaystyle \rho^{2}\left\|\omega\right\|_{L^{3}\left(\Omega\right)}^{2}\int_{0}^{1}\left(\left\|\hat{u}\right\|_{L^{6(p-1)}\left(\Omega\right)}+t\rho\left\|\omega\right\|_{L^{6(p-1)}\left(\Omega\right)}\right)^{2\left(p-1\right)}dt\\
\leq&2^{2(p-1)-1}p^{2}\rho^{2}\left\|\omega\right\|_{L^{3}\left(\Omega\right)}^{2}\left\{\left\|\hat{u}\right\|_{L^{6(p-1)}\left(\Omega\right)}^{2\left(p-1\right)}+\int_{0}^{1}\left(t\rho\left\|\omega\right\|_{L^{6(p-1)}\left(\Omega\right)}\right)^{2\left(p-1\right)}dt\right\}\\
=&2^{2p-3}p^{2}\rho^{2}\left\|\omega\right\|_{L^{3}\left(\Omega\right)}^{2}\left(\left\|\hat{u}\right\|_{L^{6(p-1)}\left(\Omega\right)}^{2\left(p-1\right)}+\frac{\rho^{2(p-1)}}{2p-1}\left\|\omega\right\|_{L^{6(p-1)}\left(\Omega\right)}^{2\left(p-1\right)}\right)\\
\leq&2^{2p-3}p^{2}\rho^{2}C_{3}^{2}\left(\left\|\hat{u}\right\|_{L^{6(p-1)}\left(\Omega\right)}^{2\left(p-1\right)}+\frac{\rho^{2(p-1)}}{2p-1}C_{6(p-1)}^{2\left(p-1\right)}\right).
\end{align*}
it follows that
\begin{align*}
\rho\left\|\Delta\omega\right\|_{L^{2}(\Omega)}\leq 2^{p-\frac{3}{2}}p\rho C_{3}\sqrt{\left\|\hat{u}\right\|_{L^{6(p-1)}\left(\Omega\right)}^{2\left(p-1\right)}+\frac{\rho^{2(p-1)}}{2p-1}C_{6(p-1)}^{2\left(p-1\right)}}+\left\|\Delta\hat{u}+F\left(\hat{u}\right)\right\|_{L^{2}(\Omega)}.
\end{align*}
Consequently, the $L^{\infty}$ error of $u$ is estimated as asserted in \eqref{coroinequ1}.
\end{proof}

\begin{coro}\label{Linfcoro3}
Let $u$ be a solution to \eqref{allenabsproblem} satisfying \eqref{h10error} for $\hat{u}\in V$ such that $\Delta\hat{u}\in L^{2}\left(\Omega\right)$.
Moreover, let $c_{0},\ c_{1}$, and $c_{2}$ be as in Theorem {\rm \ref{linf}}.
Then,
\begin{align}
&\left\|u-\hat{u}\right\|_{L^{\infty}(\Omega)}\leq c_{0}C_{2}\rho+c_{1}\rho+\nonumber\\
&~~c_{2}\left(\rho\varepsilon^{-2}C_{3}\left(1+3\left\|\hat{u}\right\|_{L^{12}(\Omega)}^{2}+3\rho C_{12}\left\|\hat{u}\right\|_{L^{12}(\Omega)}+\rho^{2}C_{12}^{2}\right)+\left\|\Delta\hat{u}+\varepsilon^{-2}(\hat{u}-\hat{u}^{3})\right\|_{L^{2}(\Omega)}\right).\label{coroinequ3}
\end{align}
\end{coro}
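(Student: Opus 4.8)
The plan is to imitate the proof of Corollary \ref{Linfcoro1} verbatim, only replacing the nonlinearity $F(u)=|u|^{p-1}u$ by $F(u)=\varepsilon^{-2}(u-u^{3})$. First I would write $u=\hat u+\rho\omega$ with $\omega\in V$, $\|\omega\|_{V}\le 1$, which is legitimate by \eqref{h10error}. Since $\Omega=(0,1)^{2}$ is convex and polygonal, both $u$ and $\hat u+\rho\omega$ are $H^{2}$-regular (using $\Delta\hat u\in L^{2}(\Omega)$), so $\omega$ solves $-\Delta\rho\omega=F(\hat u+\rho\omega)+\Delta\hat u$ with $\omega=0$ on $\partial\Omega$, and Theorem \ref{linf} applies. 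Exactly as before, $\|\omega\|_{L^{2}(\Omega)}\le C_{2}$ and $\|\omega\|_{L^{2}(\Omega)}\le\|\omega\|_{V}\le 1$ give the $c_{0}C_{2}\rho+c_{1}\rho$ part, and it remains to bound $\rho\|\Delta\omega\|_{L^{2}(\Omega)}\le\|F(\hat u+\rho\omega)-F(\hat u)\|_{L^{2}(\Omega)}+\|\Delta\hat u+F(\hat u)\|_{L^{2}(\Omega)}$.

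The only genuinely new computation is the bound on $\|F(\hat u+\rho\omega)-F(\hat u)\|_{L^{2}(\Omega)}$. Here I would \emph{not} go through the mean-value-theorem integral as in Corollary \ref{Linfcoro1}; instead I would expand the cubic directly. Writing $F(\hat u+\rho\omega)-F(\hat u)=\varepsilon^{-2}\bigl(\rho\omega-(\hat u+\rho\omega)^{3}+\hat u^{3}\bigr)$ and expanding, the bracket equals $\rho\omega-3\hat u^{2}\rho\omega-3\hat u\rho^{2}\omega^{2}-\rho^{3}\omega^{3}$. Factoring out $\rho\omega$, taking the $L^{2}$-norm, and applying the triangle inequality together with Hölder's inequality (pairing $\|\omega\|_{L^{3}(\Omega)}$ against $\|\hat u^{2}\|_{L^{3}(\Omega)}=\|\hat u\|_{L^{6}(\Omega)}^{2}$, etc., or more simply pairing powers of $\omega$ in $L^{3}$ and $L^{12}$ against powers of $\hat u$ in $L^{12}$ as suggested by the exponents appearing in \eqref{coroinequ3}) yields a bound of the shape $\rho\varepsilon^{-2}C_{3}\bigl(1+3\|\hat u\|_{L^{12}(\Omega)}^{2}+3\rho C_{12}\|\hat u\|_{L^{12}(\Omega)}+\rho^{2}C_{12}^{2}\bigr)$, after using $\|\omega\|_{L^{3}(\Omega)}\le C_{3}$ and $\|\omega\|_{L^{12}(\Omega)}\le C_{12}$ on the remaining factors of $\omega$. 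Substituting this into the estimate for $\rho\|\Delta\omega\|_{L^{2}(\Omega)}$ and then into Theorem \ref{linf} gives exactly \eqref{coroinequ3}.

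The main obstacle, such as it is, is purely bookkeeping: one must track which power of $\omega$ is placed in $L^{3}$ versus $L^{12}$ in each of the four terms of the expansion so that the Hölder exponents on the $\hat u$-factors all collapse to $L^{12}(\Omega)$ and the $\omega$-factors to $L^{3}(\Omega)$ or $L^{12}(\Omega)$, matching the stated constants $C_{3}$ and $C_{12}$. There is no analytic difficulty beyond the convexity/$H^{2}$-regularity input already invoked for Corollary \ref{Linfcoro1}, and the non-decreasing-in-$\rho$ structure of the resulting bound is immediate. Hence the corollary follows.
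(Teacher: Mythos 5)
Your proposal is correct and follows essentially the same route as the paper: decompose $u=\hat{u}+\rho\omega$, apply Theorem \ref{linf}, reduce to bounding $\left\|F(\hat{u}+\rho\omega)-F(\hat{u})\right\|_{L^{2}(\Omega)}$, and use the H\"older pairing $\left\|\omega\right\|_{L^{3}(\Omega)}$ against the bracket in $L^{6}(\Omega)$ with $\hat{u},\omega\in L^{12}(\Omega)$ and the embedding constants $C_{3},C_{12}$. Your direct expansion of the cubic is only a cosmetic variant of the paper's mean-value-theorem step, since performing the $t$-integration there yields exactly the same factor $1-3\hat{u}^{2}-3\rho\hat{u}\omega-\rho^{2}\omega^{2}$ (just make sure to commit to the $L^{3}$--$L^{6}$ pairing, not the $L^{3}$--$L^{3}$ alternative you mention in passing, which does not give an $L^{2}$ bound).
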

\begin{proof}
Let us denote  $F(u)=\varepsilon^{-2}(u-u^{3})$ in this proof.
Due to Theorem \ref{linf}, we have
\begin{align*}
\left\|u-\hat{u}\right\|_{L^{\infty}(\Omega)}&=\rho\left\|\omega\right\|_{L^{\infty}(\Omega)}\\
&\leq\rho\left(c_{0}\left\|\omega\right\|_{L^{2}(\Omega)}+c_{1}\left\|\omega\right\|_{V}+c_{2}\left\|\Delta\omega\right\|_{L^{2}(\Omega)}\right)\\
&\leq\rho\left(c_{0}C_{2}+c_{1}+c_{2}\left\|\Delta\omega\right\|_{L^{2}(\Omega)}\right).
\end{align*}
The last term $\left\|\Delta\omega\right\|_{L^{2}(\Omega)}$ is estimated by
\begin{align*}
\rho\left\|\Delta\omega\right\|_{L^{2}(\Omega)}&=\left\|F\left(\hat{u}+\rho\omega\right)+\Delta\hat{u}\right\|_{L^{2}(\Omega)}\\
&=\left\|F\left(\hat{u}+\rho\omega\right)-F\left(\hat{u}\right)+F\left(\hat{u}\right)+\Delta\hat{u}\right\|_{L^{2}(\Omega)}\\
&\leq\left\|F\left(\hat{u}+\rho\omega\right)-F\left(\hat{u}\right)\right\|_{L^{2}(\Omega)}+\left\|\Delta\hat{u}+F\left(\hat{u}\right)\right\|_{L^{2}(\Omega)}.
\end{align*}
Since the mean value theorem ensures that
\begin{align*}
&\displaystyle \int_{\Omega}\left(F\left(\hat{u}+\rho\omega\right)-F\left(\hat{u}\right)\right)^{2}dx\\
=&\displaystyle \int_{\Omega}\left(\rho\omega(x)\int_{0}^{1}F'\left(\hat{u}(x)+t\rho\omega(x)\right)dt\right)^{2}dx\\
=&\displaystyle \int_{\Omega}\left(\rho\omega(x)\int_{0}^{1}\varepsilon^{-2}\left\{\left(1-3(\hat{u}(x)+t\rho\omega(x)\right)^{2}\right\}dt\right)^{2}dx\\
=&\displaystyle \rho^{2}\varepsilon^{-4}\int_{\Omega}\omega(x)^{2}\left(\int_{0}^{1}\left\{1-3(\hat{u}(x)+t\rho\omega(x))^{2}\right\}dt\right)^{2}dx\\
=&\displaystyle \rho^{2}\varepsilon^{-4}\int_{\Omega}\omega(x)^{2}\left(\int_{0}^{1}\left(1-3\hat{u}(x)^{2}-6t\rho\omega(x)\hat{u}(x)-3t^{2}\rho^{2}\omega(x)^{2}\right)dt\right)^{2}dx\\
\leq&\rho^{2}\varepsilon^{-4}\left\|\omega\right\|_{L^{3}(\Omega)}^{2}\left\|\left(\int_{0}^{1}\left(1-3\hat{u}^{2}-6t\rho\omega\hat{u}-3t^{2}\rho^{2}\omega^{2}\right)dt\right)^{2}\right\|_{L^{3}(\Omega)}\\
=&\rho^{2}\varepsilon^{-4}\left\|\omega\right\|_{L^{3}(\Omega)}^{2}\left\|\int_{0}^{1}\left(1-3\hat{u}^{2}-6t\rho\omega\hat{u}-3t^{2}\rho^{2}\omega^{2}\right)dt\right\|_{L^{6}(\Omega)}^{2}\\
\leq&\rho^{2}\varepsilon^{-4}\left\|\omega\right\|_{L^{3}(\Omega)}^{2}\left(1+3\left\|\hat{u}^{2}\right\|_{L^{6}(\Omega)}+3\rho\left\|\omega\hat{u}\right\|_{L^{6}(\Omega)}+\rho^{2}\left\|\omega^{2}\right\|_{L^{6}(\Omega)}\right)^{2}\\
\leq&\rho^{2}\varepsilon^{-4}\left\|\omega\right\|_{L^{3}(\Omega)}^{2}\left(1+3\left\|\hat{u}\right\|_{L^{12}(\Omega)}^{2}+3\rho\left\|\hat{u}\right\|_{L^{12}(\Omega)}\left\|\omega\right\|_{L^{12}(\Omega)}+\rho^{2}\left\|\omega\right\|_{L^{12}(\Omega)}^{2}\right)^{2}\\
\leq&\rho^{2}\varepsilon^{-4}C_{3}^{2}\left(1+3\left\|\hat{u}\right\|_{L^{12}(\Omega)}^{2}+3\rho C_{12}\left\|\hat{u}\right\|_{L^{12}(\Omega)}+\rho^{2}C_{12}^{2}\right)^{2}
\end{align*}
it follows that
\begin{align*}
\rho\left\|\Delta\omega\right\|_{L^{2}(\Omega)}\leq\rho\varepsilon^{-2}C_{3}\left(1+3\left\|\hat{u}\right\|_{L^{12}(\Omega)}^{2}+3\rho C_{12}\left\|\hat{u}\right\|_{L^{12}(\Omega)}+\rho^{2}C_{12}^{2}\right)+\left\|\Delta\hat{u}+F\left(\hat{u}\right)\right\|_{L^{2}(\Omega)}.
\end{align*}
Consequently, the $L^{\infty}$ error of $u$ is estimated as asserted in \eqref{coroinequ3}.
\end{proof}

\def\thesection{Appendix~\Alph{section}}
\section{Simple bounds for the needed embedding constants}\label{sec:embedding}
\def\thesection{\Alph{section}}
The following theorem provides the best constant in the classical Sobolev inequality with critical exponents.
\begin{theo}[T.~Aubin \cite{aubin1976} and G.~Talenti \cite{talenti1976}]\label{talentitheo}
Let $u$ be any function in $W^{1,q}\left(\mathbb{R}^{n}\right)\ (n\geq 2)$, where $q$ is any real number such that $1<q<n$.
Moreover, set $p=nq/\left(n-q\right)$.
Then, $u \in L^{p}\left(\mathbb{R}^{n}\right)$ and
\begin{align*}
\left(\int_{\mathbb{R}^{n}}\left|u(x)\right|^{p}dx\right)^{\frac{1}{p}}\leq T_{p}\left(\int_{\mathbb{R}^{n}}\left|\nabla u(x)\right|_{2}^{q}dx\right)^{\frac{1}{q}}
\end{align*}
holds for
\begin{align}
T_{p}=\pi^{-\frac{1}{2}}n^{-\frac{1}{q}}\left(\frac{q-1}{n-q}\right)^{1-\frac{1}{q}}\left\{\frac{\Gamma\left(1+\frac{n}{2}\right)\Gamma\left(n\right)}{\Gamma\left(\frac{n}{q}\right)\Gamma\left(1+n-\frac{n}{q}\right)}\right\}^{\frac{1}{n}}\label{talenticonst},
\end{align}
where
$\left|\nabla u\right|_{2}=\left((\partial u/\partial x_{1})^{2}+(\partial u/\partial x_{2})^{2}+\cdots+(\partial u/\partial x_{n})^{2}\right)^{1/2}$,
and
$\Gamma$ denotes the gamma function.
\end{theo}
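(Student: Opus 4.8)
The plan is to follow the symmetrization argument of Talenti (and, independently, Aubin), which simultaneously proves the inequality, identifies the extremal functions, and yields the explicit constant. \textbf{Step 1 (reduction to nonnegative, radially decreasing profiles).} Since passing to $|u|$ leaves $\|u\|_{L^{p}(\mathbb{R}^{n})}$ unchanged and does not increase $\|\nabla u\|_{L^{q}(\mathbb{R}^{n})}$, one may assume $u\ge 0$. Let $u^{*}$ denote the symmetric decreasing rearrangement of $u$. Equimeasurability gives $\|u^{*}\|_{L^{p}(\mathbb{R}^{n})}=\|u\|_{L^{p}(\mathbb{R}^{n})}$, while the P\'olya--Szeg\H{o} inequality gives $\|\nabla u^{*}\|_{L^{q}(\mathbb{R}^{n})}\le\|\nabla u\|_{L^{q}(\mathbb{R}^{n})}$; hence replacing $u$ by $u^{*}$ never decreases the quotient $\|u\|_{L^{p}}/\|\nabla u\|_{L^{q}}$, so it suffices to bound this quotient over radial nonincreasing functions $u(x)=\phi(|x|)$.

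\textbf{Step 2 (reduction to a one-dimensional Bliss inequality and its extremals).} Writing $\omega_{n}=\pi^{n/2}/\Gamma(1+n/2)$ for the volume of the unit ball, for radial $u$ one has $\int_{\mathbb{R}^{n}}|\nabla u|_{2}^{q}\,dx=n\omega_{n}\int_{0}^{\infty}|\phi'(r)|^{q}r^{n-1}\,dr$ and $\int_{\mathbb{R}^{n}}|u|^{p}\,dx=n\omega_{n}\int_{0}^{\infty}\phi(r)^{p}r^{n-1}\,dr$. The sharp constant is therefore governed by the weighted one-dimensional inequality relating $\int_{0}^{\infty}\phi^{p}r^{n-1}\,dr$ to $\int_{0}^{\infty}|\phi'|^{q}r^{n-1}\,dr$, which is precisely Bliss's inequality. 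Its Euler--Lagrange equation is a singular second-order ODE in $r$; I would verify directly that, up to dilations, translations, and scalar multiples, its only solutions with the correct integrability and boundary behavior are the Aubin--Talenti bubbles $\phi(r)=\bigl(1+r^{q/(q-1)}\bigr)^{-(n-q)/q}$, and that they realize equality, the convexity structure of the reduced one-variable functional pinning down uniqueness of the extremal.

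\textbf{Step 3 (explicit evaluation of $T_{p}$).} Substituting the bubble $\phi(r)=\bigl(1+r^{q/(q-1)}\bigr)^{-(n-q)/q}$ into the two radial integrals and using the change of variables $s=r^{q/(q-1)}$ followed by $t=s/(1+s)$ converts each integral into an Euler Beta integral $B(a,b)=\Gamma(a)\Gamma(b)/\Gamma(a+b)$. Forming the quotient $\bigl(\int_{0}^{\infty}\phi^{p}r^{n-1}\,dr\bigr)^{1/p}\big/\bigl(\int_{0}^{\infty}|\phi'|^{q}r^{n-1}\,dr\bigr)^{1/q}$, multiplying by the geometric factor $(n\omega_{n})^{1/p-1/q}$, and simplifying the resulting product of gamma factors with the Legendre duplication and reflection formulas, I expect to land exactly on the closed form \eqref{talenticonst}.

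\textbf{Main obstacle.} Two points demand care. First, the existence of an extremal: securing attainment needs the P\'olya--Szeg\H{o} inequality in $W^{1,q}(\mathbb{R}^{n})$ together with a concentration/compactness argument; alternatively, and more self-containedly, one bypasses attainment altogether by proving the sharp one-dimensional Bliss inequality directly via a rearrangement or calculus-of-variations argument, which already delivers the sharp $T_{p}$ together with the equality cases. Second, the final gamma-function bookkeeping: the exponents $p=nq/(n-q)$, $q'=q/(q-1)$, and $(n-q)/q$ must be tracked carefully through several Beta-integral evaluations so that the product of gamma factors collapses to the stated expression. By comparison, writing down the correct bubble ansatz and checking that it solves the Euler--Lagrange ODE is routine.
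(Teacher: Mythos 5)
This statement is not proved in the paper at all: it is quoted verbatim as a classical result of Aubin and Talenti, with the proof delegated to the cited references. Your outline does reproduce the architecture of Talenti's original argument (reduction to $|u|$, symmetric decreasing rearrangement plus P\'olya--Szeg\H{o}, passage to the radial one-dimensional Bliss problem, identification of the bubble $\phi(r)=(1+r^{q/(q-1)})^{-(n-q)/q}$, and evaluation of the constant through Beta--Gamma integrals), so in that sense you are on the same route as the cited source rather than a different one.

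However, as a proof the proposal has a genuine gap, and it sits exactly where you locate the ``main obstacle.'' Verifying that the bubble solves the Euler--Lagrange ODE and computing its Rayleigh quotient does not, by itself, prove the inequality for arbitrary $u$: you must know either that the supremum of the quotient over radial nonincreasing functions is attained (which requires a compactness/concentration argument that is delicate precisely because the problem is scale invariant), or you must prove the sharp one-dimensional Bliss inequality directly. You mention both options but carry out neither, and the appeal to ``the convexity structure of the reduced one-variable functional'' to pin down uniqueness is not sound as stated --- the reduced functional is not convex, and Bliss (and Talenti following him) instead use a change of variables on the half-line together with a rearrangement-type argument to reduce to a problem where the extremal can be identified; this is the analytic heart of the theorem and cannot be waved through. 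The remaining steps (P\'olya--Szeg\H{o}, the radial reduction with the factor $n\omega_n$, and the Beta-integral evaluation of $T_p$) are standard and correctly described, so the plan is salvageable, but as written the central inequality is asserted rather than proved.
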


The following corollary, obtained from Theorem \ref{talentitheo}, provides a simple bound for the embedding constant from $H_{0}^{1}\left(\Omega\right)$ to $L^{p}(\Omega)$ for a bounded domain $\Omega$.

\begin{coro}\label{roughboundtheo}
Let $\Omega\subset \mathbb{R}^{n}\,(n\geq 2)$ be a bounded domain.
Let $p$ be a real number such that $p\in(n/(n-1),2n/(n-2)]$ if $n\geq 3$ and $p\in(n/(n-1),\infty)$ if $n=2$.
Moreover, set $q=np/(n+p).$
Then, $(\ref{embedding})$ holds for
\begin{align*}
	C_{p}\left(\Omega\right)=\left|\Omega\right|^{\frac{2-q}{2q}}T_{p},
\end{align*}
where $T_{p}$ is the constant in {\rm (\ref{talenticonst})}.
\end{coro}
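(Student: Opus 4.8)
The plan is to deduce the claim from the sharp Sobolev inequality on $\mathbb{R}^{n}$ (Theorem~\ref{talentitheo}) by extending functions in $H_{0}^{1}(\Omega)$ by zero, and then to pass from the $L^{q}$-norm of the gradient to its $L^{2}$-norm by Hölder's inequality on the bounded set $\Omega$.

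First I would dispatch the arithmetic of the exponents. Writing $q = np/(n+p)$, one verifies at once that $q>1$ is equivalent to $p(n-1)>n$, i.e.\ to the assumed lower bound $p>n/(n-1)$, and that $q<n$ is equivalent to $p<n+p$, which always holds; thus $1<q<n$, as required in Theorem~\ref{talentitheo}. A direct computation gives $n-q = n^{2}/(n+p)$, whence $nq/(n-q)=p$, so $p$ is precisely the critical exponent associated with $W^{1,q}$ in that theorem. Finally $q\le 2$ is equivalent to $p(n-2)\le 2n$, which is automatic when $n=2$ and is exactly the assumed upper bound $p\le 2n/(n-2)$ when $n\ge 3$; the borderline case $q=2$ occurs only for $n\ge 3$ and $p=2n/(n-2)$.

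Now fix $u\in H_{0}^{1}(\Omega)=W_{0}^{1,2}(\Omega)$ and let $\tilde u$ be its extension by zero to $\mathbb{R}^{n}$. Since $u\in H_{0}^{1}(\Omega)$, the extension $\tilde u$ lies in $W^{1,2}(\mathbb{R}^{n})$ with $\nabla\tilde u$ equal almost everywhere to the zero extension of $\nabla u$, supported in $\overline{\Omega}$; because $|\Omega|<\infty$ and $q\le 2$, Hölder's inequality shows $\tilde u\in W^{1,q}(\mathbb{R}^{n})$, so Theorem~\ref{talentitheo} applies and gives
\begin{align*}
\|u\|_{L^{p}(\Omega)} = \|\tilde u\|_{L^{p}(\mathbb{R}^{n})} \le T_{p}\left(\int_{\mathbb{R}^{n}}|\nabla\tilde u|_{2}^{\,q}\,dx\right)^{1/q} = T_{p}\left(\int_{\Omega}|\nabla u|_{2}^{\,q}\,dx\right)^{1/q}.
\end{align*}
Applying Hölder's inequality with the conjugate exponents $2/q$ and $2/(2-q)$ to the last integral yields $\bigl(\int_{\Omega}|\nabla u|_{2}^{\,q}\,dx\bigr)^{1/q}\le|\Omega|^{(2-q)/(2q)}\|\nabla u\|_{L^{2}(\Omega)}$, the case $q=2$ being trivial with $|\Omega|^{0}=1$. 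Combining this with the previous display and with $\|\nabla u\|_{L^{2}(\Omega)}\le\|u\|_{V}$ (valid for every $\tau\ge 0$) proves \eqref{embedding} with $C_{p}(\Omega)=|\Omega|^{(2-q)/(2q)}T_{p}$.

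I do not expect a genuine obstacle here: the argument is essentially bookkeeping. The two points that require a little care are the verification that the hypotheses on $p$ translate precisely into $1<q<n$ together with $q\le 2$ (which is exactly where the two-sided restriction on $p$ is used), and the standard but essential fact that the zero extension of an $H_{0}^{1}(\Omega)$-function belongs to $W^{1,2}(\mathbb{R}^{n})$ with gradient supported in $\overline{\Omega}$, since that is what makes Theorem~\ref{talentitheo} applicable on the whole space.
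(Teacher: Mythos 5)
Your proof is correct and follows essentially the same route as the paper: zero extension to $\mathbb{R}^{n}$, the sharp Sobolev inequality of Theorem~\ref{talentitheo} with $q=np/(n+p)$, Hölder's inequality with exponents $2/q$ and $2/(2-q)$ on the bounded set $\Omega$, and finally $\left\|\nabla u\right\|_{L^{2}(\Omega)}\leq\left\|u\right\|_{V}$. Your explicit check of the exponent arithmetic (including the borderline case $q=2$ when $n\geq 3$ and $p=2n/(n-2)$, which the paper glosses over by stating $q<2$) is a minor refinement, not a different argument.
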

\begin{proof}
By zero extension outside $\Omega$, we may regard $u\in H_{0}^{1}\left(\Omega\right)$ as an element $u\in W^{1,q}\left(\mathbb{R}^{n}\right)$; note that $q<2$.
Therefore, from Theorem \ref{talentitheo},
\begin{align}
	\left\|u\right\|_{L^{p}\left(\Omega\right)}
	\leq T_{p}\left(\int_{\Omega}\left|\nabla u\left(x\right)\right|_{2}^{q}dx\right)^{\frac{1}{q}}.\label{embedding/theo/1}
\end{align}
H\"{o}lder's inequality gives
\begin{align}
\int_{\Omega}\left|\nabla u\left(x\right)\right|_{2}^{q}dx&
\leq\left(\int_{\Omega}
\left|
	\nabla u\left(x\right)
\right|_{2}^{q\cdot\frac{2}{q}}dx\right)^{\frac{q}{2}}\left(\int_{\Omega}1^{\frac{2}{2-q}}dx\right)^{\frac{2-q}{2}}\nonumber\\
&=\left|\Omega\right|^{\frac{2-q}{2}}
	\left(\int_{\Omega}\left|
	\nabla u\left(x\right)
\right|_{2}^{2}dx\right)^{\frac{q}{2}},\nonumber
\end{align}
that is,
\begin{align}
    \left(\int_{\mathbb{R}^{n}}\left|\nabla u\left(x\right)\right|_{2}^{q}dx\right)^{\frac{1}{q}}
\leq\left|\Omega\right|^{\frac{2-q}{2q}}\left\|\nabla u\right\|_{L^{2}\left(\Omega\right)},\label{embedding/theo/2}
\end{align}
where $\left|\Omega\right|$ is the measure of $\Omega$.
From (\ref{embedding/theo/1}) and (\ref{embedding/theo/2}),
it follows that
\begin{align*}
    \left\|u\right\|_{L^{p}\left(\Omega\right)}&
\leq\left|\Omega\right|^{\frac{2-q}{2q}}T_{p}\left\|\nabla u\right\|_{L^{2}\left(\Omega\right)}.
\end{align*}
For any $\tau \geq 0$, it is true that $\left\|\nabla u\right\|_{L^{2}\left(\Omega\right)} \leq \left\|u\right\|_{V}$.
\end{proof}

\begin{rem}
The case that $p=2$ is ruled out in Corollary {\rm \ref{roughboundtheo}},
but it is well known that
\begin{align*}
\displaystyle \left\|u\right\|_{L^{2}(\Omega)}\leq\frac{1}{\sqrt{\lambda_{1}+\tau}}\left\|u\right\|_{V},
\end{align*}
where $\lambda_{1}$ is the first eigenvalue of the following problem:
\begin{align}
\left(\nabla u,\nabla v\right)=\lambda\left(u,v\right)~~{\rm for~all~}v\in V.\label{weaklapeig}
\end{align}
Note that, when $\Omega = (0,1)^2$, $\lambda_{1}=2\pi^{2}$.
\end{rem}

The use of the following theorem enables us to obtain an upper bound of the embedding constant when the first eigenvalue $\lambda_{1}$ of \eqref{weaklapeig} is concretely estimated.
We employ the smaller of the two estimations of $C_p$ derived from Corollary \ref{roughboundtheo} and Theorem \ref{plumembedding}.
\begin{theo}[\cite{plum2008}]\label{plumembedding}
Let $\lambda_{1}$ denote the first eigenvalue of the problem \eqref{weaklapeig}.\\
$a)$~~Let $n=2$ and $p\in[2,\infty).$
With the largest integer $\nu$ satisfying $\nu\leq p/2,\ (\ref{embedding})$ holds for
\begin{align*}
C_{p}\left(\Omega\right)=\left(\frac{1}{2}\right)^{\frac{1}{2}+\frac{2\nu-3}{p}}\left[\frac{p}{2}\left(\frac{p}{2}-1\right)\cdots\left(\frac{p}{2}-\nu+2\right)\right]^{\frac{2}{p}}\left(\lambda_{1}+\frac{p}{2}\tau\right)^{-\frac{1}{p}},
\end{align*}
where $\displaystyle \frac{p}{2}\left(\frac{p}{2}-1\right)\cdots\left(\frac{p}{2}-\nu+2\right)=1$ if $\nu=1.$\\[1pt]
$b)$~~Let $n\geq 3$ and $p\in[2,2n/(n-2)]$.
With $s:=n(p^{-1}-2^{-1}+n^{-1})\in[0,1],\ (\ref{embedding})$ holds for
\begin{align*}
C_{p}\left(\Omega\right)=\left(\frac{n-1}{\sqrt{n}\left(n-2\right)}\right)^{1-s}\left(\frac{s}{s\lambda_{1}+\tau}\right)^{\frac{s}{2}}.
\end{align*}
\end{theo}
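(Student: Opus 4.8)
The plan is to deduce both parts from two ingredients: a dimension-dependent Gagliardo--Nirenberg inequality on $\mathbb{R}^{n}$,
\begin{align*}
\|u\|_{L^{p}(\Omega)}\leq A_{n,p}\,\|\nabla u\|_{L^{2}(\Omega)}^{\,1-s}\,\|u\|_{L^{2}(\Omega)}^{\,s},\qquad s:=n\left(\tfrac1p-\tfrac12+\tfrac1n\right),
\end{align*}
valid for every $u\in H_{0}^{1}(\Omega)$ regarded, by zero extension, as an element of $W^{1,2}(\mathbb{R}^{n})$, together with an elementary scalar optimization converting such an estimate into a bound by $\|\cdot\|_{V}$ and $\lambda_{1}$. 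For the stated $p$-ranges one has $s\in[0,1]$; when $n=2$, $s=2/p$, and when $n\geq3$, $s=n/p-n/2+1$, which is exactly the H\"older exponent making $L^{p}$ an interpolation space between $L^{2}$ and $L^{2n/(n-2)}$.

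For the Gagliardo--Nirenberg ingredient with $n\geq3$, I would write $\|u\|_{L^{p}}\leq\|u\|_{L^{2}}^{\,s}\,\|u\|_{L^{2n/(n-2)}}^{\,1-s}$ by H\"older's inequality and bound the critical norm by the explicit Sobolev inequality $\|u\|_{L^{2n/(n-2)}(\mathbb{R}^{n})}\leq\frac{n-1}{\sqrt{n}\,(n-2)}\|\nabla u\|_{L^{2}(\mathbb{R}^{n})}$, which follows from the Gagliardo inequality $\|v\|_{L^{n/(n-1)}(\mathbb{R}^{n})}\leq\tfrac12\prod_{i=1}^{n}\|\partial_{i}v\|_{L^{1}}^{1/n}$ --- the factor $\tfrac12$ being produced by the two-sided estimate $|v(x)|\leq\tfrac12\int_{\mathbb{R}}|\partial_{i}v|\,dx_{i}$ --- applied to $v=|u|^{2(n-1)/(n-2)}$, followed by the arithmetic--geometric mean and Cauchy--Schwarz inequalities (to replace $\prod_{i}\|\partial_{i}u\|_{L^{2}}^{1/n}$ by $\tfrac1{\sqrt n}\|\nabla u\|_{L^{2}}$), H\"older's inequality, and cancellation of one power of $\|u\|_{L^{2n/(n-2)}}$; alternatively one may simply invoke Theorem~\ref{talentitheo} or Corollary~\ref{roughboundtheo} at the price of a slightly larger constant. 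For $n=2$ the critical exponent is infinite, so instead I would iterate the two-dimensional estimate
\begin{align*}
\|u\|_{L^{2m}(\mathbb{R}^{2})}^{\,m}\leq\frac{m}{2\sqrt2}\,\|u\|_{L^{2(m-1)}(\mathbb{R}^{2})}^{\,m-1}\,\|\nabla u\|_{L^{2}(\mathbb{R}^{2})},
\end{align*}
which follows from $\|v\|_{L^{2}(\mathbb{R}^{2})}\leq\tfrac12\|\partial_{1}v\|_{L^{1}}^{1/2}\|\partial_{2}v\|_{L^{1}}^{1/2}$ applied to $v=|u|^{m}$, Cauchy--Schwarz, and $\|\partial_{1}u\|_{L^{2}}^{1/2}\|\partial_{2}u\|_{L^{2}}^{1/2}\leq2^{-1/2}\|\nabla u\|_{L^{2}}$. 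Starting from $m=p/2$ and running this recursion $\nu-1$ times lowers the exponent to $\beta:=p/2-\nu+1\in[1,2)$, and a final interpolation of $L^{2\beta}$ between $L^{2}$ and $L^{4}$ (the $L^{4}$-norm handled by the $m=2$ instance of the recursion) supplies $\|u\|_{L^{2\beta}}^{\,\beta}\leq2^{-(\beta-1)/2}\|u\|_{L^{2}}\|\nabla u\|_{L^{2}}^{\,\beta-1}$. Multiplying the coefficients $\tfrac{p/2-j}{2\sqrt2}$, $j=0,\dots,\nu-2$, by this last factor produces $A_{2,p}$ and the exact accumulated power of $\tfrac12$ appearing in part~(a).

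For the optimization ingredient, the variational characterization \eqref{weaklapeig} gives $\|\nabla u\|_{L^{2}}^{2}\geq\lambda_{1}\|u\|_{L^{2}}^{2}$. Setting $X=\|\nabla u\|_{L^{2}}^{2}$, $Y=\|u\|_{L^{2}}^{2}$ and using homogeneity, it suffices to maximize $X^{1-s}Y^{s}/(X+\tau Y)$ over the cone $X\geq\lambda_{1}Y$. The function $r\mapsto r+\tau-\left(\tfrac{s\lambda_{1}+\tau}{s}\right)^{s}r^{1-s}$ is convex on $(0,\infty)$ and is nonnegative at $r=\lambda_{1}$ by the weighted arithmetic--geometric mean inequality $\left(\tfrac{s\lambda_{1}+\tau}{s}\right)^{s}\lambda_{1}^{1-s}\leq s\cdot\tfrac{s\lambda_{1}+\tau}{s}+(1-s)\lambda_{1}=\lambda_{1}+\tau$; together with the convexity this gives nonnegativity on $[\lambda_{1},\infty)$ (the subcase where the minimizer of this convex function exceeds $\lambda_{1}$ needing the elementary check noted below), so that $\|\nabla u\|_{L^{2}}^{\,1-s}\|u\|_{L^{2}}^{\,s}\leq\left(\tfrac{s}{s\lambda_{1}+\tau}\right)^{s/2}\|u\|_{V}$ and hence $C_{p}(\Omega)=A_{n,p}\left(\tfrac{s}{s\lambda_{1}+\tau}\right)^{s/2}$. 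For $n\geq3$ this is the asserted formula with $A_{n,p}=\left(\tfrac{n-1}{\sqrt n\,(n-2)}\right)^{1-s}$, and for $n=2$ the substitution $s=2/p$ rewrites $\left(\tfrac{s}{s\lambda_{1}+\tau}\right)^{s/2}$ as $\left(\lambda_{1}+\tfrac p2\tau\right)^{-1/p}$, reproducing part~(a) once combined with $A_{2,p}$.

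The analytic facts above are classical, so the real work --- and the main obstacle --- is the precise bookkeeping of the constants. Three points need attention: (i) keeping the factor $\tfrac12$ in every use of the one-dimensional integral bound, since this is exactly what turns the naive Sobolev/Ladyzhenskaya constants into the sharper ones in the statement; (ii) in part~(a), handling the terminal exponent $\beta=p/2-\nu+1$ when $p/2\notin\mathbb{Z}$, where the interpolation-against-$L^{4}$ step must be tracked separately; and (iii) justifying nonnegativity of $r\mapsto r+\tau-\left(\tfrac{s\lambda_{1}+\tau}{s}\right)^{s}r^{1-s}$ on $[\lambda_{1},\infty)$ for all admissible $s,\tau$ --- via the convexity argument this reduces, in the regime where the unconstrained minimizer exceeds $\lambda_{1}$ (i.e. $\tau$ large), to the elementary scalar inequality $(1-s)^{1/s}+(1-s)^{(1-s)/s}\leq1$ on $(0,1]$, which is the one mildly delicate step.
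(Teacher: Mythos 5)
The paper offers no proof of this statement at all --- it is quoted verbatim from \cite{plum2008} as a background tool --- so there is no internal argument to compare against; what you have written is essentially a reconstruction of the derivation in the cited source (iterated Nirenberg/Ladyzhenskaya-type inequalities with the explicit factor $\tfrac12$ for $n=2$, H\"older interpolation against the critical Sobolev inequality with constant $\tfrac{n-1}{\sqrt{n}(n-2)}$ for $n\geq 3$, then conversion to $\|\cdot\|_{V}$ via $\|\nabla u\|_{L^{2}}^{2}\geq\lambda_{1}\|u\|_{L^{2}}^{2}$). I checked your bookkeeping and it does reproduce the stated constants exactly: in part (b) the interpolation exponent $s$ is the right one and the chain gives $\left(\tfrac{n-1}{\sqrt{n}(n-2)}\right)^{1-s}\left(\tfrac{s}{s\lambda_{1}+\tau}\right)^{s/2}$; in part (a) the $\nu-1$ recursion steps with coefficients $\tfrac{p/2-j}{2\sqrt2}$ together with the terminal factor $2^{-(\beta-1)/2}$, $\beta=p/2-\nu+1$, accumulate, after raising to the power $2/p$, precisely $(1/2)^{\frac12+\frac{2\nu-3}{p}}\bigl[\tfrac{p}{2}\cdots(\tfrac{p}{2}-\nu+2)\bigr]^{2/p}$, and $s=2/p$ turns $\left(\tfrac{s}{s\lambda_{1}+\tau}\right)^{s/2}$ into $\left(\lambda_{1}+\tfrac{p}{2}\tau\right)^{-1/p}$ (I verified the integer case, e.g.\ $p=6$ gives $(3/4)^{1/3}$, matching the formula).

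Two remarks. First, your optimization step is more complicated than it needs to be: with $X=\|\nabla u\|_{L^{2}}^{2}$, $Y=\|u\|_{L^{2}}^{2}$ and $X\geq\lambda_{1}Y$, one has $X+\tau Y=(1-s)X+sX+\tau Y\geq(1-s)X+(s\lambda_{1}+\tau)Y\geq\left(\tfrac{s\lambda_{1}+\tau}{s}\right)^{s}X^{1-s}Y^{s}$ directly by the weighted arithmetic--geometric mean inequality, so the convexity discussion, the case distinction on the location of the unconstrained minimizer, and the scalar inequality $(1-s)^{1/s}+(1-s)^{(1-s)/s}\leq 1$ (which is true, and your reduction to it is correct, but superfluous) can all be deleted. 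Second, minor care is needed at the endpoints and in the density arguments: for $s=0$ in part (b) (critical $p$) the bound degenerates to the pure Sobolev inequality, and the pointwise bound $|v(x)|\leq\tfrac12\int_{\mathbb{R}}|\partial_{i}v|\,dx_{i}$ should be applied to smooth compactly supported approximations of the zero-extended $H_{0}^{1}$ function before passing to the limit. With these points tidied up, your sketch is a correct proof of the quoted theorem, alternatively obtainable (with slightly worse constants for $n\geq3$) from Theorem \ref{talentitheo} or Corollary \ref{roughboundtheo} as you note.
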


\section*{Acknowledgements}
  The first author (K.T.) is supported by 
  the Waseda Research Institute for Science and Engineering, the Grant-in-Aid for Young Scientists (Early Bird Program).
  The second author (K.S.) is supported by JSPS KAKENHI Grant Number 16K17651. 

\bibliographystyle{plain} 
\bibliography{ref}

\end{document}